\newtheorem{theorem}{Theorem}[section]
\newtheorem{lemma}[theorem]{Lemma}
\newtheorem{proposition}{Proposition}[section]
\newtheorem{corollary}[theorem]{Corollary}
\newtheorem{claim}{Claim}[section]
\newtheorem{question}[theorem]{Question}
\newenvironment{customthm}[1]
  {\innercustomthm}
  {\endinnercustomthm}
\newenvironment{customcor}[1]
  {\innercustomcor}
  {\endinnercustomcor}  
\theoremstyle{definition}
\newtheorem{definition}[theorem]{Definition}
\numberwithin{equation}{section}
\theoremstyle{remark}
\newtheorem{remark}[theorem]{Remark}
\def\bt{\begin{theorem}}
	\def\et{\end{theorem}}
\def\bl{\begin{lemma}}
	\def\el{\end{lemma}}
\def\br{\begin{remark}}
	\def\er{\end{remark}}
\def\bc{\begin{corollary}}
	\def\ec{\end{corollary}}
\def\bd{\begin{definition}}
	\def\ed{\end{definition}}
\def\bp{\begin{proposition}}
	\def\ep{\end{proposition}}
\def\bcl{\begin{claim}}
	\def\ecl{\end{claim}}
\def\bpf{\begin{proof}}
	\def\epf{\end{proof}}
\def\be{\begin{equation}}
\def\ee{\end{equation}}
\def\bea{\begin{eqnarray}}
\def\eea{\end{eqnarray}}
\begin{document}
\title{Higher dimensional surgery and Steklov eigenvalues}
\author{Han Hong}
\address{Department of Mathematics \\
                 University of British Columbia \\
                 Vancouver, BC V6T 1Z2}
\email{honghan@math.ubc.ca}
\thanks{2010 {\it Mathematics Subject Classification.} 35P15, 58J50, 35J25. \\
This research was partially supported by the Natural Sciences and Engineering Research Council of Canada.}
\date{}
\maketitle

\begin{abstract}
We show that for compact Riemannian manifolds of dimension at least 3 with nonempty boundary, we can modify the manifold by performing surgeries of codimension 2 or higher, while keeping the Steklov spectrum nearly unchanged. 
This shows that certain changes in 
%aspects of
the topology of a domain do not have an effect
when considering shape optimization questions for Steklov eigenvalues in dimensions 3 and higher. Our result generalizes the 1-dimensional surgery in \cite{FS2} to higher dimensional surgeries and to higher eigenvalues. It is proved in \cite{FS2} that the unit ball does not maximize the first nonzero normalized Steklov eigenvalue among contractible domains in $\mathbb{R}^n$, for $n \geq 3$. We show that this is also true for higher  Steklov eigenvalues. Using similar ideas we show that in $\mathbb{R}^n$, for $n\geq 3$, the $j$-th normalized Steklov eigenvalue is not maximized in the limit by a sequence of contractible domains degenerating to the disjoint union of $j$ unit balls, in contrast to the case in dimension 2 \cite{GP1}.
\end{abstract}

\section{Introduction}
In this paper we study questions related to shape optimization for the Steklov eigenvalue problem on compact Riemannian manifolds of dimension at least three.
The classical theorem of R. Weinstock \cite{Wein} states that among all simply connected domains in $\mathbb{R}^2$ with fixed boundary length $2\pi$, the unit disc uniquely maximizes the first Steklov eigenvalue. It was shown in \cite{bfnt} that the Weinstock inequality holds in any dimension, provided one restricts to the class of convex sets. Namely, for every bounded convex set $\Omega \subset \mathbb{R}^n$ we have $\bar{\sigma}_1(\Omega) < \bar{\sigma}_1(\mathbb{B}^n)$, where $\bar{\sigma}_j(\Omega)=\sigma_j(\Omega)|\partial \Omega|^{1/(n-1)}$ denotes the normalized eigenvalue. However, in the wider class consisting of contractible domains, the higher dimensional analogue of Weinstock's theorem fails since there exist contractible domains $\mathbb{B}^n_{\epsilon,\delta}$ (see Figure $\ref{balltube}$) with $\bar{\sigma}_1(\mathbb{B}^n_{\epsilon,\delta})>\bar{\sigma}_{1}(\mathbb{B}^n)$ when $n \geq 3$ (\cite{FS2}). The proof involves a 1-dimensional surgery in which a small tubular neighbourhood of a curve connecting boundary components is removed from an annular domain. The construction in \cite{FS2} shows more generally that in dimensions greater than or equal to three the number of boundary components does not affect the supremum of the normalized first Steklov eigenvalue. This is in contrast to the situation in dimension two, where by adding an extra boundary component to a surface the normalized first Steklov eigenvalue can be made strictly larger (\cite{FS3}, \cite{MP}).

In this paper we consider the effect of higher dimensional surgeries on the Steklov spectrum of compact Riemannian manifolds with boundary. Specifically, we show that one can perform surgeries of codimension two or higher while keeping the normalized Steklov eigenvalues nearly unchanged. Given a compact $n$-dimensional Riemannian manifold $M$ with boundary, a compact properly embedded $m$-dimensional submanifold $\Sigma$ of $M$, and $\delta>0$ small, we let $\Omega_{\Sigma, \delta}$ denote the Lipschitz domain obtained by removing the $\delta$ tubular neighbourhood of $\Sigma$ from $M$. We call this procedure a {\it surgery of codimension $n-m$} (see Section \ref{higherdimensionalsurgery} for more precise details). Our main result is the following.

\begin{theorem} \label{higherorder}
The Steklov spectrum of a compact Riemannian manifold with boundary changes continuously under surgeries of codimension at least two, in the sense that,
if $n-m \geq 2$ then $\lim_{\delta \rightarrow 0^+} \bar{\sigma}_j(\Omega_{\Sigma,\delta}) = \bar{\sigma}_j(\Omega)$ for $j=0,1,2, \ldots$.
\end{theorem}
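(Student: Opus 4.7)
The plan is to establish both directions of the convergence using the min--max characterization
\[
\sigma_j(\Omega) \;=\; \min_{\substack{V \subset H^1(\Omega)\\ \dim V = j+1}}\; \max_{0\neq u \in V}\; \frac{\int_\Omega |\nabla u|^2}{\int_{\partial \Omega} u^2}.
\]
The hypothesis $n-m\geq 2$ first handles the normalization: $\partial T_\delta\cap\Omega$ has $(n-1)$-dimensional volume of order $\delta^{n-m-1}$ and $\partial \Omega \cap T_\delta$ of order $\delta^{n-m}$, so $|\partial \Omega_{\Sigma,\delta}|\to |\partial \Omega|$ and the factor $|\partial\Omega_{\Sigma,\delta}|^{1/(n-1)}$ converges. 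It therefore suffices to show $\sigma_j(\Omega_{\Sigma,\delta})\to \sigma_j(\Omega)$.

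The key analytic input is a family of cutoffs $\eta_\delta\colon \Omega\to [0,1]$ with $\eta_\delta \equiv 0$ on $T_\delta$, $\eta_\delta\equiv 1$ off $T_{2\delta}$, and $\int_\Omega |\nabla \eta_\delta|^2 \to 0$. Their existence reflects the fact that codimension $\geq 2$ submanifolds have vanishing $H^1$-capacity: in Fermi coordinates along $\Sigma$ take $\eta_\delta$ radial in the normal distance $r$, giving cutoff energy of order $|\log \delta|^{-1}$ when $n-m=2$ and $\delta^{n-m-2}$ when $n-m\geq 3$.

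For the upper bound $\limsup \sigma_j(\Omega_{\Sigma,\delta})\leq \sigma_j(\Omega)$, let $\phi_0,\dots,\phi_j$ be $L^2(\partial \Omega)$-orthonormal Steklov eigenfunctions on $\Omega$; by elliptic regularity they are uniformly $C^1$-bounded on a compact neighborhood of $\Sigma$. Insert $\mathrm{span}(\eta_\delta\phi_0,\dots,\eta_\delta\phi_j)$ into the min--max for $\Omega_{\Sigma,\delta}$, expand $|\nabla(\eta_\delta\phi_i)|^2 = \eta_\delta^2|\nabla\phi_i|^2 + 2\eta_\delta\phi_i\nabla\eta_\delta\cdot\nabla\phi_i + \phi_i^2|\nabla\eta_\delta|^2$, and apply Cauchy--Schwarz against $\|\nabla\eta_\delta\|_{L^2}\to 0$: the Dirichlet numerators converge to $\int_\Omega |\nabla \phi_i|^2$, while the boundary Gram matrix on $\partial\Omega_{\Sigma,\delta}$ converges to the identity (the tube-boundary contribution is $O(\delta^{n-m-1})$), yielding the claim.

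The main obstacle is the reverse inequality. I would construct a linear extension operator $E_\delta\colon H^1(\Omega_{\Sigma,\delta})\to H^1(\Omega)$ by setting $E_\delta v$ on $T_\delta$ to be a fiberwise harmonic extension of the trace $v|_{\partial T_\delta}$ across each normal disk $B^{n-m}_\delta$. Combining scaled trace estimates on tubes with the codimension $\geq 2$ capacity bound, I expect that, uniformly over bounded subsets of $H^1(\Omega_{\Sigma,\delta})$,
\[
\|\nabla E_\delta v\|_{L^2(\Omega)}^2 \leq \|\nabla v\|_{L^2(\Omega_{\Sigma,\delta})}^2 + o(1),\qquad \Big|\|E_\delta v\|_{L^2(\partial \Omega)}^2 - \|v\|_{L^2(\partial \Omega_{\Sigma,\delta})}^2\Big| = o(1).
\]
Applying $E_\delta$ to $L^2(\partial \Omega_{\Sigma,\delta})$-orthonormal Steklov eigenfunctions $v_{0,\delta},\dots,v_{j,\delta}$ on $\Omega_{\Sigma,\delta}$---which are uniformly $H^1$-bounded thanks to the upper bound just proved---and using the span of the $E_\delta v_{i,\delta}$ as a trial family for $\sigma_j(\Omega)$ then produces $\sigma_j(\Omega)\leq \sigma_j(\Omega_{\Sigma,\delta})+o(1)$. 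The delicate technical point I expect is controlling the $L^2$ mass of the trace of $v$ on the shrinking hypersurface $\partial T_\delta$ and the Dirichlet energy of the extension inside $T_\delta$ uniformly in $v$; both hinge on the same vanishing-capacity phenomenon as the cutoff construction, together with a Poincar\'e--Wirtinger estimate on thin tubes.
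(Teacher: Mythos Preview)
Your upper-bound direction is essentially correct and close in spirit to the paper's, though note a slip in the cutoff: a cutoff transitioning between $T_\delta$ and $T_{2\delta}$ has Dirichlet energy of order $1$ in codimension two, not $|\log\delta|^{-1}$. You need a logarithmic cutoff over a dyadic range such as $[\delta^2,\delta]$ (this is exactly the paper's Lemma~\ref{higher}); with that fix your min--max argument goes through. The paper's own upper-bound argument is slightly different: rather than cutting off the $\phi_i$, it restricts a $j$-th eigenfunction $v$ of $\Omega$ to $\Omega_\delta$ and projects off the first $j$ eigenfunctions of $\Omega_\delta$, but the effect is the same.

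The real gap is in your lower bound. The extension operator you propose, with
\[
\int_{T_\delta}|\nabla E_\delta v|^2 = o(1)\quad\text{uniformly over bounded sets in }H^1(\Omega_\delta),
\]
is \emph{not} a consequence of vanishing capacity. Capacity controls the energy needed to cut off a constant; it says nothing about extending oscillatory boundary data. Concretely, in codimension two take $v_\delta(r,\theta)=\psi(r/\delta)\cos\theta$ on an annulus $D_{r_0}\setminus D_\delta$ with $\psi$ a fixed bump: this has $H^1$-norm bounded independently of $\delta$ and $\|v_\delta\|_{L^2(\partial D_\delta)}\sim\sqrt{\delta}\to 0$, yet the (energy-minimizing) harmonic extension to $D_\delta$ has Dirichlet energy bounded below by a positive constant. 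So neither the uniform $H^1$ bound nor the trace bound $\|v\|_{L^2(\partial T_\delta)}\to 0$ (which is the content of the paper's Lemma~\ref{vanish}) forces $\int_{T_\delta}|\nabla E_\delta v|^2\to 0$. You would need genuine $H^{1/2}$ control of the trace on $\partial T_\delta$, and nothing in your outline supplies it.

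The paper sidesteps this entirely. Instead of extending $u_\delta^j$ to $\Omega$, it uses elliptic estimates and Arzel\`a--Ascoli to extract a $C^2_{\mathrm{loc}}$ limit $u^j$ on $\Omega\setminus\Sigma$, then invokes the logarithmic cutoffs (Lemma~\ref{higher}) in a removable-singularity argument to show that $u^j$ is a weak Steklov eigenfunction on all of $\Omega$ with eigenvalue $\sigma^j=\lim\sigma_j(\Omega_\delta)$. The no-concentration lemma (Lemma~\ref{vanish}) is used not to bound an extension but to show that the limits $\{u^j\}$ remain $L^2(\partial\Omega)$-orthonormal, which by induction forces $\sigma^j\ge\sigma_j(\Omega)$. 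If you want to salvage the extension-operator route you would need to exploit the Steklov condition $\partial_\nu u_\delta^j=\sigma_j u_\delta^j$ on $\partial T_\delta$ to rule out the oscillatory behavior above; this may be possible but is substantially more work than the compactness argument.
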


One purpose of such surgeries is to simplify the topology of the manifold. 
When $m=1$ and $j=1$ this theorem was proved in \cite{FS2}, in which case the surgery can be applied to construct a manifold with connected boundary from any given manifold with boundary, while keeping the first normalized Steklov eigenvalue nearly unchanged. Theorem 1.1 implies more generally that the supremum of the $j$-th normalized eigenvalue among all manifolds is the same as the supremum among manifolds having relatively simple topology; that is, among manifolds which can be obtained by performing surgeries up to codimension two. Specifically, our result implies that given any compact Riemannian manifold $\Omega$ of dimension $n \geq 3$, and given any $\epsilon >0 $ and $k \in \mathbb{N}$, there exists a smooth subdomain $\tilde{\Omega}$ of $\Omega$ such that the homomorphism of the $m$-th homology groups $i_*: H_m(\partial \tilde{\Omega})\rightarrow H_m(\tilde{\Omega})$ induced by the inclusion $i: \partial \tilde{\Omega} \hookrightarrow \tilde{\Omega}$ is injective for $m=0, 1, \ldots \min\{n-3, \left\lfloor \frac{n}{2}\right\rfloor \}$, and such that $|\bar{\sigma}_j(\tilde{\Omega})-\bar{\sigma}_j(\tilde{\Omega})|<\epsilon$ for $j=1, \ldots, k.$ This conclusion involves some standard but rather involved algebraic topology, and we omit the proof since our results here are analytic in nature.

An immediate consequence of Theorem $\ref{higherorder}$ is that for $ n\geq 3$, balls do not maximize higher normalized Steklov  eigenvalues among contractible domains in $\mathbb{R}^n$.

\begin{corollary}\label{maintheorem1}
For $n \geq 3$, there exists a contractible domain $\Omega^*$ in $\mathbb{R}^n$ such that for any $j \geq 1$,
$\bar{\sigma}_j(\Omega^*)>\bar{\sigma}_j(\mathbb{B}^n)$. 
\end{corollary}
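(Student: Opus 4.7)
The plan is to take $\Omega^* = \mathbb{B}^n_{\epsilon,\delta}$ as in \cite{FS2}---the contractible domain obtained from the annular region $A_\epsilon := \mathbb{B}^n \setminus \overline{B_\epsilon(p)}$ by excising a small $\delta$-tube around an embedded arc $\Sigma$ connecting the two boundary components---with $\epsilon$ fixed small and then $\delta$ also chosen small. Because $\dim\Sigma = 1$ and $n \geq 3$, the surgery has codimension $n-1\geq 2$, so Theorem~\ref{higherorder} applies. The strategy proceeds in two halves: first show that the \emph{full} normalized Steklov spectrum of $A_\epsilon$ strictly dominates that of $\mathbb{B}^n$ with a uniform-in-$j$ gap; then transfer this domination to $\mathbb{B}^n_{\epsilon,\delta}$ via Theorem~\ref{higherorder}.

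For the first half, I would separate variables in spherical harmonics: on $A_\epsilon$, harmonic functions of degree $\ell$ take the form $u(r,\theta) = (ar^\ell + br^{-(\ell+n-2)})Y_\ell(\theta)$, and imposing the Steklov conditions at $r=1$ and $r=\epsilon$ (with appropriate outward normals) reduces to a $2\times 2$ determinantal equation yielding two eigenvalues $\sigma^-_\ell(\epsilon)\leq\sigma^+_\ell(\epsilon)$ per degree $\ell$. A small-$\epsilon$ asymptotic analysis gives, for $\ell\geq 1$,
\[
\sigma^-_\ell(\epsilon) = \ell - \tfrac{\ell(2\ell+n-2)}{\ell+n-2}\,\epsilon^{2\ell+n-2} + o(\epsilon^{2\ell+n-2}), \qquad \sigma^+_\ell(\epsilon)\sim \tfrac{\ell+n-2}{\epsilon},
\]
while for $\ell=0$ there is the trivial eigenvalue $0$ and a large one $\sim(n-2)/\epsilon$. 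Combining with the normalization boost $|\partial A_\epsilon|^{1/(n-1)}/|\partial\mathbb{B}^n|^{1/(n-1)} = (1+\epsilon^{n-1})^{1/(n-1)} = 1 + \tfrac{\epsilon^{n-1}}{n-1}+O(\epsilon^{2(n-1)})$, the crucial inequality $2\ell+n-2 > n-1$ for every $\ell\geq 1$ shows that the normalization gain of order $\epsilon^{n-1}$ strictly beats the eigenvalue loss of order $\epsilon^{2\ell+n-2}$, giving $\bar{\sigma}_j(A_\epsilon) \geq (1+c\,\epsilon^{n-1})\bar{\sigma}_j(\mathbb{B}^n)$ for some universal $c>0$ and every $j\geq 1$ such that $\sigma_j(A_\epsilon)$ lies in the $\sigma^-$ branch. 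For indices $j$ in the regime where the very large $\sigma^+$ eigenvalues interleave (when $j \gtrsim\epsilon^{-(n-1)}$), a separate interleaving argument controlling the sub-leading Weyl expansion is needed to preserve the inequality, using that each $\sigma^+_\ell$ is of order $1/\epsilon$ so contributes favorably.

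The second half applies Theorem~\ref{higherorder} with $\Omega = A_\epsilon$ and $\Sigma$ as above, yielding $\bar{\sigma}_j(\mathbb{B}^n_{\epsilon,\delta}) \to \bar{\sigma}_j(A_\epsilon)$ as $\delta\to 0^+$ for each fixed $j$. The main obstacle is passing from this pointwise-in-$j$ convergence to the existence of a single $\delta_0$ making $\bar{\sigma}_j(\Omega^*) > \bar{\sigma}_j(\mathbb{B}^n)$ for \emph{every} $j\geq 1$ simultaneously. I would address this by (i) using Theorem~\ref{higherorder} directly to handle all $j\leq J$ for some chosen threshold, and (ii) invoking a quantitative stability estimate for thin-tube surgery---removal affects boundary area only by $O(\delta^{n-2})$---together with the fact that the multiplicative gap $c\,\epsilon^{n-1}\bar{\sigma}_j(\mathbb{B}^n)$ from the first half grows with $j$, to control the tail $j>J$. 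Making this uniform-in-$j$ argument rigorous, likely by unwinding the quantitative bounds inside the proof of Theorem~\ref{higherorder} rather than treating it as a black box, together with settling the high-$j$ interleaving regime on $A_\epsilon$ where the Weyl leading terms of the annulus and ball nearly coincide, is the most delicate part of the argument.
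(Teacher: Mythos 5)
Your construction is exactly the paper's: take the annular domain $\mathbb{B}^n \setminus \overline{\mathbb{B}^n_\epsilon}$, then remove the $\delta$-tube around a radial arc and invoke Theorem~\ref{higherorder}. The difference is in how much you re-derive. The paper simply cites \cite[Proposition 3.1]{FS2} for the inequality $\bar{\sigma}_j(\mathbb{B}^n \setminus \mathbb{B}^n_\epsilon) > \bar{\sigma}_j(\mathbb{B}^n)$, so the entire ``first half'' of your argument --- the separation of variables in spherical harmonics, the two-parameter family $\sigma^\pm_\ell(\epsilon)$, the small-$\epsilon$ asymptotics, and the comparison with the normalization factor $(1+\epsilon^{n-1})^{1/(n-1)}$ --- is a re-derivation of a result the paper treats as available. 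Your asymptotics match the known ones, so this does no harm, but it also does not need to be redone.

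On the uniformity-in-$j$ question: you have put your finger on a genuine subtlety, but it is one that the paper's own proof does not resolve either. The paper applies Theorem~\ref{higherorder} \emph{pointwise} in $j$ and concludes ``we can find a small $\delta > 0$'' --- a $\delta$ that in general depends on $j$, since Theorem~\ref{higherorder} gives no uniformity of the convergence $\bar{\sigma}_j(\Omega_{\Sigma,\delta}) \to \bar{\sigma}_j(\Omega)$ over $j$. Read strictly (a single $\Omega^*$ working for every $j \geq 1$), both your proof and the paper's are incomplete without the quantitative argument you sketch; read in the weaker sense (for each $j$ there is a contractible $\Omega^*$ with $\bar{\sigma}_j(\Omega^*) > \bar{\sigma}_j(\mathbb{B}^n)$), both are complete, and this weaker reading is all that is used later in the paper (e.g.\ in Theorem~\ref{answer} the index $j$ is fixed before the domain is chosen). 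So the interleaving analysis of $\sigma^+$ branches for $j \gtrsim \epsilon^{-(n-1)}$ and the uniform-in-$j$ control of the surgery, which you correctly flag as the hardest parts, are not in the paper; you should either observe that the pointwise version suffices for the applications, or be explicit that you are proving a stronger statement than the paper's argument delivers.
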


This is not surprising, especially in light of the fact that a disc does not maximize higher Steklov eigenvalues among simply connected domains in $\mathbb{R}^2$. The classical result \cite{HPS} gives the upper bound $\bar{\sigma}_j(\Omega)\leq 2\pi j$ for any simply connected domain $\Omega$ in $\mathbb{R}^2$ and all $j \geq 0$. When $j=1$, equality is characterized in Weinstock's theorem (\cite{Wein}), but for $j \geq 2$ the inequality is strict (\cite{GP1}, \cite{FS2019P}). However, it was shown in \cite{GP1} that the inequality is sharp and is achieved in the limit by a sequence of simply connected domains degenerating to $j$ identical discs. By analogy with the $n=2$ case, it is natural to ask whether a similar result is true in higher dimensions. Using ideas from Theorem \ref{higherorder} we show that this is not the case.

\begin{theorem} \label{answer}
For $n \geq 3$ and $j \geq 2$, the supremum of the $j$-th normalized Steklov eigenvalue among contractible domains in $\mathbb{R}^n$ is not achieved in the limit by a sequence of contractible domains degenerating to the disjoint union of $j$ identical round balls.
\end{theorem}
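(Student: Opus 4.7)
The plan is to contrast the value forced by degeneration to $j$ balls with an explicit contractible competitor built from the Fraser--Schoen domain. For $j$ disjoint unit balls $\bigsqcup_{i=1}^{j}\mathbb{B}^{n}$, the Steklov spectrum is the multiset union of the individual spectra, so $\sigma_0=\cdots=\sigma_{j-1}=0$ and $\sigma_j=\sigma_1(\mathbb{B}^n)$, while the total boundary measure is $j\omega_{n-1}$. Hence
\be
\bar{\sigma}_j\Bigl(\bigsqcup_{i=1}^{j}\mathbb{B}^{n}\Bigr)=j^{1/(n-1)}\bar{\sigma}_1(\mathbb{B}^n),
\ee
and standard continuity of Steklov eigenvalues under the relevant Lipschitz or Hausdorff convergence forces $\lim_{k}\bar{\sigma}_j(\Omega_k)$ along any degenerating contractible sequence $\Omega_k$ to equal this value. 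It therefore suffices to exhibit a single contractible $\Omega^{\star}\subset\mathbb{R}^{n}$ with $\bar{\sigma}_j(\Omega^{\star})>j^{1/(n-1)}\bar{\sigma}_1(\mathbb{B}^n)$.

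To build $\Omega^{\star}$, I take $j$ disjoint translated copies of the contractible Fraser--Schoen domain $\mathbb{B}^{n}_{\epsilon,\delta}$, for which $\bar{\sigma}_1(\mathbb{B}^{n}_{\epsilon,\delta})>\bar{\sigma}_1(\mathbb{B}^n)$. Their disjoint union $N_{0}$ satisfies, by the same spectral bookkeeping,
\be
\bar{\sigma}_j(N_{0})=j^{1/(n-1)}\bar{\sigma}_1(\mathbb{B}^{n}_{\epsilon,\delta})>j^{1/(n-1)}\bar{\sigma}_1(\mathbb{B}^n).
\ee
I then connect the $j$ components by a tree of smooth embedded arcs in $\mathbb{R}^{n}$ and thicken those arcs to cylindrical tubes of radius $\tau>0$, producing a single connected domain $\Omega_{\tau}$. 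Because each $\mathbb{B}^{n}_{\epsilon,\delta}$ and each tube is contractible and the tubes are attached along contractible disks, a routine homotopy argument shows that $\Omega_{\tau}$ is contractible.

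The analytic core is to show $\lim_{\tau\to 0^{+}}\bar{\sigma}_{j}(\Omega_{\tau})=\bar{\sigma}_{j}(N_{0})$; once granted, fixing $\tau$ small enough yields $\Omega^{\star}=\Omega_{\tau}$ with the desired strict inequality. Since the tubes are tubular neighbourhoods of $1$-dimensional arcs, their codimension is $n-1\geq 2$, exactly the surgery regime of Theorem~\ref{higherorder}. I plan to adapt that theorem's test-function machinery: for the upper bound on $\sigma_{j}(\Omega_{\tau})$ I transplant the eigenfunctions of $N_{0}$ by harmonic extension across the tubes, using capacity estimates controlled by $\tau^{n-2}$; for the lower bound I use cutoff functions supported away from the tube region; and the boundary-area correction $|\partial\Omega_{\tau}|-|\partial N_{0}|$ is likewise $O(\tau^{n-2})$. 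Under this control the first $j+1$ eigenvalues of the connected domain $\Omega_{\tau}$ organise as $\sigma_{0}=0$, a cluster of $j-1$ near-zero modes (functions that are essentially constant on each component but bridge across the thin tubes), and $\sigma_{j}(\Omega_{\tau})\to\sigma_{1}(\mathbb{B}^{n}_{\epsilon,\delta})$. The principal obstacle is precisely this last convergence: Theorem~\ref{higherorder} removes a tube from an ambient manifold, whereas here we attach one, so the min-max comparison must be reorganised and the Sobolev trace control at the tube-ball interface must be made uniform as $\tau\to 0$. I expect the same capacity-type estimates that drive Theorem~\ref{higherorder} to adapt with only cosmetic changes.
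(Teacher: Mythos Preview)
Your overall strategy coincides with the paper's: assemble $j$ copies of the Fraser--Schoen domain $\mathbb{B}^n_{\epsilon,\delta}$ into a single contractible domain whose $j$-th normalized eigenvalue is close to $j^{1/(n-1)}\bar{\sigma}_1(\mathbb{B}^n_{\epsilon,\delta})>j^{1/(n-1)}\bar{\sigma}_1(\mathbb{B}^n)$. The paper, however, joins the copies by \emph{overlapping} them slightly along a common axis (the domain $\tilde{\Omega}^n_{\varepsilon,j}$) rather than by attaching thin tubes, and then invokes Theorem~\ref{overlappingconvergence} together with Remark~\ref{remark5.2}(1) directly. Since that theorem has already established convergence of the Steklov spectrum of slightly overlapping copies to that of the disjoint union, the proof of Theorem~\ref{answer} collapses to a two-line comparison with Theorem~\ref{theorem:j-balls}.

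Your tube construction is a legitimate variant, but the convergence result you need is \emph{not} an adaptation of Theorem~\ref{higherorder}, and the changes are not cosmetic. Theorem~\ref{higherorder} removes a tubular neighbourhood of a submanifold from a fixed ambient manifold $\Omega$ and shows $\sigma_j(\Omega_\delta)\to\sigma_j(\Omega)$; both domains sit inside the same $\Omega$, share most of their boundary, and the limit is connected. In your dumbbell construction the limiting object is a \emph{disjoint union}, the entire tube boundary is new rather than inherited, and $j-1$ extra near-zero eigenvalues must be produced in the limit. That is exactly the degeneration analysed in Theorem~\ref{overlappingconvergence}, not the surgery of Theorem~\ref{higherorder}; the relevant ingredients are the nonconcentration estimates near the junction points and the logarithmic cutoff of Steps~2--3 of that proof, not Lemma~\ref{vanish}. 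So your plan can be carried out, but completing it means essentially reproving Theorem~\ref{overlappingconvergence} in a tube geometry, whereas the paper simply reuses the version it has already established.
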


This is a direct consequence of a more general theorem on the convergence of Steklov eigenvalues of overlapping domains as they are pulled apart, Theorem $\ref{overlappingconvergence}$, discussed in Section \ref{ballnotmaximizer}.

The paper is organized as follows. In section $\ref{higherdimensionalsurgery}$ we recall basic knowledge about the Steklov eigenvalue problem and prove our main result, Theorem $\ref{higherorder}$, on higher dimensional surgeries and the Steklov spectrum. In section $\ref{ballnotmaximizer}$ we discuss applications and related results in connection with questions about shape optimization for the Steklov problem for domains in $\mathbb{R}^n$, and prove Corollary \ref{maintheorem1}, Theorem $\ref{overlappingconvergence}$, and Theorem $\ref{answer}$.   

\vskip 2mm   \noindent {\bf Acknowledgments.} The author would like to thank his advisors Jingyi Chen and Ailana Fraser for helpful discussions.

\section{Continuity of Steklov eigenvalues under codimension 2 surgeries}  \label{higherdimensionalsurgery}

In this section, we recall some basic facts about the Steklov eigenvalue problem, and prove our main theorem on the continuity of eigenvalues under certain higher dimensional surgeries, up to codimension 2. Let $(\Omega,g)$ be a compact, connected $n$-dimensional smooth Riemannian manifold with nonempty boundary.
The Steklov eigenvalue problem is given by
\[
     \begin{cases} \Delta u=0 & \text{in}\ \Omega\\
      \frac{\partial u}{\partial \nu}=\sigma u & \text{on}\ \partial \Omega
       \end{cases}
\]
where $\nu$ is the outer unit normal to $\Omega$. When the trace operator $T: H^1(\Omega)\rightarrow L^2(\partial \Omega)$ is compact, in particular, if the domain has Lipschitz boundary, the spectrum of the Steklov eigenvalue problem is discrete 
\[
        0=\sigma_0<\sigma_1(\Omega)\leq \sigma_2(\Omega)\leq \cdots
\]
and the eigenvalues  have a standard Rayleigh quotient characterization
\begin{equation} \label{characterization}
     \sigma_j(\Omega)
     =\inf \left \{ \frac{\int_\Omega |\nabla u|^2}{\int_{\partial\Omega}u^2} \ : \
        0 \neq u\in H^1(\Omega), \ \int_{\partial \Omega}u\phi_i=0 \mbox{ for } i=0,\ldots, j-1  \right \}
 \end{equation}
where $\{\phi_0, \phi_1, \phi_2, \ldots \}$ is a complete orthonormal basis of $L^2(\partial \Omega)$ such that $\phi_i$ is an eigenfunction with eigenvalue $\sigma_i(\Omega)$ for each $i=1,\,2,\ldots$. Alternatively,
\begin{equation} \label{min-max}
       \sigma_j(\Omega) 
       =\inf_{E_{j+1}}\ \sup_{u\in E_{j+1}\setminus \{0\}} \frac{\int_\Omega|\nabla u|^2}{\int_{\partial \Omega} u^2}
\end{equation}
where the infimum is taken over all $(j+1)$-dimensional subspaces $E_{j+1}$ of the Sobolev space $H^1(\Omega)$.

We consider a smooth $m$-dimensional submanifold $\Sigma$ embedded in the manifold $\Omega$, with boundary $\partial \Sigma$ embedded in $\partial \Omega$, and meeting $\partial \Omega$ orthogonally along $\partial \Sigma$.
Let
\[
         L_{\delta}=\{x\in \Omega: d(x,\Sigma) < \delta\} \ \ \text{and}\ \ 
         T_{\delta}=\{x\in \Omega: d(x,\Sigma)=\delta\}. 
\]
Denote
\[
     \Omega_\delta=\Omega\setminus L_{\delta}. 
\]

\begin{definition}\label{definition}
Taking out the tubular neighborhood $L_{\delta}$ of $\Sigma$ gives a domain $\Omega_\delta$. Throughout the paper, we call this procedure  {\it surgery of dimension m} or {\it surgery of codimension n-m}. 
\end{definition}

We will apply the following ``no concentration" lemma to show that if $2\leq m\leq n-2$, then the interior and boundary $L^2$ norms of a sequence of eigenfunctions  don't concentrate near the neck $T_{\delta}$ as $\delta\rightarrow 0^+$. When $m=1$, that is when $\Sigma$ is a smooth curve, this phenomenon was verified in \cite[Lemma 4.2]{FS2}. We prove here that it is in fact still true provided $2\leq m\leq n-2$.

\begin{lemma} \label{vanish}
Suppose $1\leq m\leq n-2$. There exists a constant $r_0>0$ such that if $u_{\delta}\in W^{1,2}(\Omega_{\delta})$ and
\[
     \int_{\Omega_{r_0/2} \setminus \Omega_{r_0}} u_\delta^2 
     +\int_{\Omega_\delta \setminus \Omega_{r_0}}|\nabla u_\delta|^2 \leq C
\]
for $\delta\in(0,r_0/2)$, with $C$ independent of $\delta$. Then 
\begin{equation}\label{neck estimate}
      \lim_{\delta\rightarrow 0}\|u_{\delta}\|_{L^2(T_{\delta})}=0,
\end{equation}
\begin{equation}\label{interior annulus estimate}
      \lim_{s\rightarrow 0} \|u_\delta\|_{L^2(\Omega_\delta\setminus \Omega_s)}=0,
\end{equation}
\begin{equation}\label{boundary annulus estimate}
    \lim_{s \rightarrow 0}\|u_{\delta}\|_{L^2((L_s\setminus L_\delta)\cap \partial \Omega)}=0
\end{equation}
for any $0<\delta<s<r_0/2$.
\end{lemma}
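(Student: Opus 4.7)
The plan is to work in Fermi coordinates around $\Sigma$ and exploit the codimension hypothesis $k := n - m \geq 2$ through the factor $r^{k-1}$ that appears in the normal volume element. By a partition of unity, the problem reduces to two kinds of local chart: interior charts $(x,y) \in \Sigma \times \mathbb{R}^k$ in which $d(\cdot,\Sigma) \sim |y|$ and $dV = (1+O(|y|))\,dx\,dy$; and boundary charts $(x',x_m,y)$ near $\partial\Sigma$, with $\partial\Omega = \{x_m = 0\}$ and $\Sigma = \{y = 0\}$. The latter charts are well defined because the orthogonal intersection of $\Sigma$ with $\partial\Omega$ along $\partial\Sigma$ forces the $y$-directions to be tangent to $\partial\Omega$.

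For \eqref{interior annulus estimate} and \eqref{neck estimate} I pass to polar coordinates $y = r\theta$ in the fibre and use the radial identity
\[
u_\delta(x,t\theta) = u_\delta(x,r_0\theta) - \int_t^{r_0} \partial_r u_\delta(x,r\theta)\,dr,\qquad \delta \leq t < r_0,
\]
together with Cauchy--Schwarz weighted by $r^{k-1}$,
\[
\Big(\int_t^{r_0} \partial_r u_\delta\,dr\Big)^2 \leq \Big(\int_t^{r_0} (\partial_r u_\delta)^2\,r^{k-1}\,dr\Big)\int_t^{r_0} r^{-(k-1)}\,dr,
\]
where the last factor is $O(\log(r_0/t))$ for $k=2$ and $O(t^{-(k-2)})$ for $k\geq 3$. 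For \eqref{interior annulus estimate}, squaring the identity and integrating against $t^{k-1}\,dt\,d\theta\,dx$ over $\{\delta < t < s\}$ makes the volume weight $t^{k-1}$ absorb the singular factor, producing a Dirichlet contribution of order $s^2(1+|\log s|)\cdot C$ together with a trace term of order $s^k$ coming from the value at $r_0$ (itself controlled by averaging over $r \in (r_0/2, r_0)$ and invoking the hypothesis on $\int_{\Omega_{r_0/2} \setminus \Omega_{r_0}} u_\delta^2$). For \eqref{neck estimate}, I evaluate the identity at $t = \delta$ and integrate against the surface element $\delta^{k-1}\,d\theta\,dx$ of $T_\delta$; the Dirichlet contribution becomes $\delta^{k-1}\int_\delta^{r_0} r^{-(k-1)}\,dr \cdot C$, which is $O(\delta\log(r_0/\delta))$ for $k=2$ and $O(\delta)$ for $k\geq 3$, while the $r_0$-trace term is $O(\delta^{k-1})$; both vanish as $\delta \to 0$.

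Finally, for \eqref{boundary annulus estimate} I reduce to \eqref{interior annulus estimate} by a one-dimensional trace in the $x_m$ direction. For any $y$ with $|y| > \delta$, the segment $\{(x',x_m,y) : 0 \leq x_m \leq t_0\}$ lies in $\Omega_\delta$, so the elementary trace inequality
\[
u_\delta(x',0,y)^2 \leq \tfrac{2}{t_0}\int_0^{t_0} u_\delta(x',x_m,y)^2\,dx_m + 2t_0 \int_0^{t_0} |\partial_{x_m} u_\delta|^2\,dx_m
\]
integrates over $x' \in \partial\Sigma$ and $\delta < |y| < s$ to
\[
\|u_\delta\|_{L^2((L_s \setminus L_\delta) \cap \partial\Omega)}^2 \lesssim \tfrac{1}{t_0} \|u_\delta\|_{L^2(\Omega_\delta \cap L_s)}^2 + t_0\,C.
\]
Choosing $t_0$ small first and then $s$ small via \eqref{interior annulus estimate} yields the conclusion uniformly in $\delta$. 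The main obstacle throughout will be ensuring that all bounds depend only on the $\delta$-independent quantities in the hypothesis: this is precisely where the cancellation between the volume factor $r^{k-1}$ and the divergent integral $\int_\delta^{r_0} r^{-(k-1)}\,dr$ is used, and it breaks down when $k = 1$.
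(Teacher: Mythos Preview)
Your proof is correct and takes a genuinely different route from the paper. For \eqref{neck estimate} and \eqref{interior annulus estimate}, the paper first multiplies by a radial cutoff $\varphi$ so that $v_\delta=\varphi u_\delta$ vanishes at $r=r_0$, and then on each normal fibre replaces $v_\delta$ by the harmonic function $h$ on the annulus $D_{r_0}\setminus D_\delta$ with the same boundary values; since $h=0$ on $\partial D_{r_0}$, the identity $\Delta h^2=2|\nabla h|^2$ integrates to a clean bound $\int_{\partial D_\delta}v_\delta^2\le\varepsilon_n(\delta)\int|\nabla v_\delta|^2$ with no outer trace term. You instead keep $u_\delta$ and use the fundamental theorem of calculus along rays with a Cauchy--Schwarz weighted by $r^{k-1}$, paying for an outer trace term at $r_0$ that you then control by averaging over $(r_0/2,r_0)$. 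Both methods exploit the same codimension factor $r^{k-1}$; the paper's harmonic replacement is a sharper device (it automatically minimises Dirichlet energy and kills the outer term), while yours is more elementary and avoids introducing the auxiliary $h$. For \eqref{boundary annulus estimate} the contrast is stronger: the paper runs a coarea argument in $\Sigma$ near $\partial\Sigma$, defining $F_{s,\delta}(\tau)=\int_{\partial\Sigma_\tau}\int_{D_s\setminus D_\delta}v_\delta^2$, finding a good level $\tau_0$ by averaging, and then estimating $|F_{s,\delta}(\tau_0)-F_{s,\delta}(0)|$ via the derivative and the mean-curvature bound on $\partial\Sigma_\tau$. Your one-dimensional trace in the $x_m$-direction (available precisely because $\Sigma$ meets $\partial\Omega$ orthogonally, so the segments $\{0\le x_m\le t_0\}$ stay in $\Omega_\delta$ for $|y|>\delta$) reduces the boundary estimate directly to \eqref{interior annulus estimate} plus a gradient term, and is noticeably shorter. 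One small point worth making explicit: the coordinate chart near $\partial\Sigma$ in which simultaneously $\partial\Omega=\{x_m=0\}$ and $d(\cdot,\Sigma)\sim|y|$ exists only up to metric comparability, so the inclusion of $\{0\le x_m\le t_0,\ \delta<|y|<s\}$ in $\Omega_\delta\setminus\Omega_{cs}$ holds with a harmless constant $c$; this does not affect your conclusion.
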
 
\begin{proof}
For simplicity of presentation we will assume that the normal bundle of $\Sigma$ is trivial; otherwise, we can consider local trivializations and add up all of the corresponding estimates.  Choose $r_0>0$ sufficiently small such that the exponential map of the normal bundle of $\Sigma$ is a diffeomorphism from the $r_0$ neighbourhood of the zero section onto its image, and such that the metric on the $r_0$ tubular neighbourhood $L_{r_0}$ of $\Sigma$ in $M$ is uniformly equivalent to the product metric $\tilde{g}+dr^2+r^2g_{S^{n-m-1}}$ on  $\Sigma\times D_{r_0}$, where $\tilde{g}$ is the metric on $\Sigma$ induced from $g$, $g_{S^{n-m-1}}$ is the standard metric on the sphere $S^{n-m-1}$ and $D_t$ is the ball of radius $t$ centred at the origin in $\mathbb{R}^{n-m}$. Since $\Sigma$ intersects $\partial\Omega$ orthogonally along $\partial \Sigma$, we may further choose $r_0$ sufficiently small such that the metric on $L_{r_0}\cap \partial \Omega$ is uniformly equivalent to the product metric $g_{\partial\Sigma}+dr^2+r^2g_{S^{n-m-1}}$.

We first localize the support of $u_\delta$ to lie near $\Sigma$. Choose a smooth radial cutoff function such that
\[ 
         \varphi(x,r,\theta)=\begin{cases} 1 \ \ \ r\leq \frac{r_0}{2} \\
                                                             0 \ \ \ r\geq r_0
         \end{cases}
\]
and define $v_{\delta}=\varphi u_{\delta}$. It follows from Schwarz and arithmetic geometric mean inequalities that 
\[
          |\nabla v_\delta|^2\leq 2(\varphi^2|\nabla u_\delta|^2+u_\delta^2|\nabla \varphi|^2).
\]
Since $|\nabla \varphi|$ can be bounded in terms of $r_0$ which is a fixed number, we have
\begin{equation}\label{uniform energy bound}
        \int_{\Omega_\delta}|\nabla v_\delta|^2 
        \leq  2\int_{\Omega_\delta \setminus \Omega_{r_0}}|\nabla u_\delta|^2
             +2C_1\int_{\Omega_{r_0/2}\setminus \Omega_{r_0}}u^2_{\delta} 
        \leq C_2
\end{equation}
where $C_1$, $C_2$ are constants depending only on $C$ and $r_0$. We will choose $\delta$ much smaller than $r_0$. From the construction we have that $u_\delta=v_\delta$ on $T_{\delta}$, and thus to prove equation $(\ref{neck estimate})$ in the lemma it suffices to prove that for any $\varepsilon>0$
\[
        \int_{T_{\delta}} u_\delta^2
        =\int_{T_{\delta}}v_{\delta}^2
        \leq \varepsilon\int_{\Omega_{\delta}}|\nabla v_\delta|^2
\]
for sufficiently small $\delta$. Since the metric on $\Omega_\delta$ is uniformly equivalent to the product metric on the support of $v_\delta$, it suffices to prove the estimate for the product metric.

For a fixed point $p$ in $\Sigma$ we denote the restriction of $v_{\delta}(x,r,\theta)$ to the annulus $D_{r_0} \setminus D_{\delta}$ in $\mathbb{R}^{n-m}$ at this point $p$ by $v(r,\theta)$. Choose a harmonic function $h$ on $D_{r_0} \setminus D_{\delta}$ as follows
\begin{equation*}
      \begin{cases} \Delta h=0 & D_{r_0}\setminus D_\delta \\
                                   h=v=0 & \partial D_{r_0}\\
                                   h=v  & \partial D_{\delta}.
      \end{cases}
\end{equation*}
Since harmonic functions minimize the Dirichlet energy we have
\begin{equation} \label{aa}
        \int_{D_{r_0}\setminus D_\delta}|\nabla h|^2
        \leq \int_{D_{r_0}\setminus D_\delta}|\nabla v|^2. 
\end{equation}
For any $\sigma$ with $\delta\leq \sigma\leq r_0$ we have
\begin{align*}
    \int_{D_{r_0}\setminus D_\sigma}\Delta h^2
        &=\int_{\partial D_{r_0}}\frac{\partial h^2}{\partial r}-\int_{\partial D_{\sigma}}\frac{\partial h^2}{\partial \sigma}\\
        &=-\int_{\partial D_{\sigma}}\frac{\partial h^2}{\partial \sigma}\\
        &=-\sigma^{n-m-1}\frac{d}{d\sigma}\left[\sigma^{-n+m+1}\int_{\partial D_\sigma}h^2\right] 
\end{align*}
where last equality follows since the volume measure on $\partial D_\sigma$ is $\sigma^{n-m-1}$ times that on the unit sphere $\partial D_1$. Since $\Delta h=0$, we have
\[
      \int_{D_{r_0}\setminus D_\sigma}\Delta h^2 = 2\int_{D_{r_0}\setminus D_\sigma} |\nabla h|^2,
\]
which together with $(\ref{aa})$ implies
\begin{align*}
     -\sigma^{n-m-1}\frac{d}{d\sigma} \left [ \sigma^{-n+m+1}\int_{\partial D_\sigma}h^2 \right ]
      &= 2\int_{D_{r_0}\setminus D_\sigma}|\nabla h|^2\\
      &\leq 2\int_{D_{r_0}\setminus D_\delta}|\nabla h|^2\\
      &\leq 2\int_{D_{r_0}\setminus D_\delta}|\nabla v|^2.
\end{align*}
Now dividing both sides by $\sigma^{n-m-1}$ and integrating with respect to $\sigma$ over the interval $[\delta,r_0]$ we obtain 
\[
         \delta^{-n+m+1}\int_{\partial D_\delta}v^2
         \leq 2 \left(\int_\delta^{r_0}\sigma^{-n+m+1}\ d\sigma \right) \int_{D_{r_0}\setminus D_\delta} |\nabla v|^2.
\]
If $1\leq m\leq n-2$, this implies that
\[
       \int_{\partial D_{\delta}}v^2  \leq \varepsilon_n(\delta)\int_{D_{r_0}\setminus D_\delta}|\nabla v|^2
\]
where $\varepsilon_{m+2}(\delta)=2\delta\ln(\frac{r_0}{\delta})$ and $\varepsilon_{n}(\delta)=\frac{2\delta}{n-m-2}$ for $n\geq m+3$. Integrating the above inequality over $\Sigma$ we obtain
\begin{equation} \label{equation:tube-estimate}
       \int_{T_{\delta}}v_{\delta}^2
       \leq \varepsilon_n(\delta)\int_{\Sigma^m}\int_{D_{r_0}\setminus D_\delta}|\nabla v|^2
       \leq \varepsilon_n(\delta)\int_{\Omega_\delta}|\nabla v_\delta|^2,
\end{equation}
since $|\nabla v|\leq |\nabla v_\delta|$ because $\nabla v$ is the Euclidean gradient on the slice $D_{r_0} \setminus D_\delta$ at a point on $\Sigma$. Since $\lim_{\delta\rightarrow 0}\varepsilon_n(\delta)=0$ for $1\leq m\leq n-2$, this completes the proof of $(\ref{neck estimate})$.

In what follows we shall prove $(\ref{interior annulus estimate})$ and $(\ref{boundary annulus estimate})$ of the lemma. By the same argument used to prove (\ref{equation:tube-estimate}), we have that for any $\delta\leq t<r_0/2$
\[ 
      \int_{T_t}v_{\delta}^2\leq \varepsilon_n(t)\int_{\Omega_\delta}|\nabla v_\delta|^2
      \leq C_2\varepsilon_n(t)
\]
where $\varepsilon_n(t)$ is defined as above, and $C_2$ is as in $(\ref{uniform energy bound})$ and is independent of $\delta$. Integrating with respect to $t$ over $[\delta,s]$ for $s<r_0/2$ yields 
\begin{equation}\label{interior L^2 bound} 
    \int_{\Omega_\delta\setminus \Omega_s}v_\delta^2=\int_{L_s\setminus L_\delta}v_\delta^2
    \leq \begin{cases} 
    C_2 \left(s^2\ln(\frac{r_0}{s})-\delta^2\ln(\frac{r_0}{\delta})+\frac{s^2-\delta^2}{2} \right), & n=m+2 \\
    C_2 \, (s^2-\delta^2)/(n-m-2), & n\geq m+3.
    \end{cases}
\end{equation}
Since the right hand sides tend to zero as $s \rightarrow 0$ for all $\delta < s$, this concludes the proof of $(\ref{interior annulus estimate})$.

Denote $\Sigma_t=\{x\in\Sigma: dist_\Sigma(x,\partial\Sigma)\leq t\}$ and denote $\partial\Sigma_t=\{x\in\Sigma: dist_{\Sigma}(x,\partial\Sigma)=t\}$ (note this is only part of the topological boundary of $\Sigma_t$). We choose $t$ sufficiently small  such that $H_{\partial \Sigma_\tau}\leq C$ for all $\tau \leq t$, where $H_{\partial \Sigma_\tau}$ is the mean curvature of $\partial \Sigma_\tau$ with respect to outer unit normal. By the coarea formula,

\[
     \int_0^t\int_{\partial \Sigma_\tau}\left(\int_{D_s\setminus D_\delta(x)}v_\delta^2\right) \ d\tau
     = \int_{\Sigma_t}\left(\int_{D_s\setminus D_\delta(x)}v_\delta^2\right)
     \leq \int_\Sigma\int_{D_s\setminus D_\delta(x)}v_\delta^2
     =\int_{L_s\setminus L_\delta}v_\delta^2.
\]
Denote
\[
     F_{s,\delta}(\tau):=\int_{\partial \Sigma_\tau}\left(\int_{D_s\setminus D_\delta(x)}v_\delta^2\right).
\]
Then $(\ref{interior L^2 bound})$ yields
\[
    \lim_{s\rightarrow 0}\int_0^t F_{s,\delta}(\tau)\ d\tau=0.
\]
Thus there exists $\tau_0\in (0,t)$ such that
\[
    \lim_{s\rightarrow 0}F_{s,\delta}(\tau_0)=0.
\]

We differentiate $F_{s,\delta}(\tau)$ to get
\begin{align*}
      \frac{d}{d\tau}F_{s,\delta}(\tau)
      &=\int_{\partial \Sigma_\tau}\left(\int_{D_s\setminus D_\delta(x)}2v_\delta\frac{d v_\delta}{d\tau}\right) 
           -\int_{\partial \Sigma_\tau}\left(\int_{D_s\setminus D_\delta(x)}v_\delta^2\right)H_{\partial\Sigma_\tau}.
\end{align*}
Integrating over $[0,\tau_0]$, applying the coarea formula, and using Cauchy-Schwarz inequality and arithmetic-geometric mean inequality result in that for any $0<\varepsilon<1$
\begin{align*}
    | F_{s,\delta}  (\tau_0)-  F_{s,\delta}(0)| 
    &= \left|\int_{\Sigma_{\tau_0}}\left(\int_{D_s\setminus D_\delta(x)}2v_\delta\frac{d v_\delta}{d\tau}\right)     
         -\int_0^{\tau_0}\int_{\partial \Sigma_\tau}
         \left(\int_{D_s\setminus D_\delta(x)}v_\delta^2\right)H_{\partial \Sigma_{\tau}}  \ d\tau\right| \\
    &\leq \varepsilon \int_{\Sigma_{\tau_0}}\left(\int_{D_s\setminus D_\delta(x)}|\nabla v_\delta|^2\right)
         +(1/\varepsilon+C)\int_{\Sigma_{\tau_0}}\left(\int_{D_s\setminus D_\delta(x)}v_\delta^2\right) \\
    &\leq C_2\varepsilon+(1/\varepsilon+C)\int_{L_s\setminus L_\delta}v_\delta^2
\end{align*}
Taking $\varepsilon=s$ and letting $s$ tend to zero, from $(\ref{interior L^2 bound})$ it follows that
$$F_{s,\delta}(0)\rightarrow 0 \ \text{as}\ \ s\rightarrow 0.$$
Hence we obtain the equation $(\ref{boundary annulus estimate})$.
\end{proof}
\

\begin{lemma}\label{higher} 
Consider the following logarithmic cut-off function along the submanifold $\Sigma^m$, where $1\leq m\leq n-2$, 
\[
       \varphi_{\delta}=\begin{cases}0 \ & r \leq \delta^2 \\ 
                                       \frac{2\ln \delta-\ln r}{\ln\delta} & \delta^2\leq r \leq \delta\\
                                       1& \delta\leq r.
                                 \end{cases}
\]
Then 
$\int_\Omega|\nabla \varphi_\delta|^2\rightarrow 0$ as $\delta\rightarrow 0^+.$
\end{lemma}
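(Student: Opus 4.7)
The plan is to reduce $\int_\Omega |\nabla \varphi_\delta|^2$ to a purely radial integral in tubular coordinates around $\Sigma$, using the same setup as in the proof of Lemma \ref{vanish}. Since $\varphi_\delta$ depends only on the distance $r$ to $\Sigma$, we have $\nabla \varphi_\delta = \varphi_\delta'(r)\,\nabla r$, which is supported on the annular tubular region $\{\delta^2 \le r \le \delta\}$ and has norm $|\nabla \varphi_\delta| = \frac{1}{r\,|\ln \delta|}$ there (using $|\nabla r| = 1$ away from $\Sigma$).

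Working in the tubular neighborhood $L_{r_0}$, the metric is uniformly equivalent to the product metric $\tilde{g} + dr^2 + r^2 g_{S^{n-m-1}}$, so the volume form is comparable to $r^{n-m-1}\, dr\, d\theta\, dV_{\tilde{g}}$. Assuming the normal bundle is trivial (otherwise one sums over a finite local trivialization, exactly as in Lemma \ref{vanish}), this yields
$$
\int_\Omega |\nabla \varphi_\delta|^2
\;\le\; \frac{C\,|\Sigma|\,|S^{n-m-1}|}{(\ln \delta)^2}\int_{\delta^2}^{\delta} r^{n-m-3}\, dr,
$$
where $C$ depends only on $r_0$ and the metric.

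The conclusion then splits into two codimension cases. When $n = m+2$ (codimension two), the radial integrand is $r^{-1}$, so $\int_{\delta^2}^\delta r^{-1}\,dr = |\ln \delta|$, and the estimate reduces to $O\!\left(1/|\ln \delta|\right)$, which tends to zero. When $n \ge m+3$, we have $\int_{\delta^2}^\delta r^{n-m-3}\, dr \le \delta^{n-m-2}/(n-m-2)$, giving a bound of order $\delta^{n-m-2}/(\ln \delta)^2$, which also tends to zero.

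There is no serious obstacle here; the logarithmic cutoff is calibrated precisely so that the factor $(\ln \delta)^{-2}$ absorbs the logarithmic divergence of $\int r^{-1}\,dr$ in the critical codimension-two case. The only care required is that the geometric setup (metric equivalence and volume form) be borrowed intact from the proof of Lemma \ref{vanish}, together with the device of local trivializations to handle a possibly nontrivial normal bundle.
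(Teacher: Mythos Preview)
Your argument is correct and is essentially identical to the paper's proof: both reduce to the radial integral $\frac{c\,|\Sigma|}{(\ln\delta)^2}\int_{\delta^2}^{\delta} r^{n-m-3}\,dr$ via the product-metric tubular coordinates, and then split into the cases $n=m+2$ (yielding $O(1/|\ln\delta|)$) and $n\ge m+3$ (yielding $O(\delta^{n-m-2}/(\ln\delta)^2)$). The only cosmetic difference is that you spell out the geometric setup and the $|\nabla r|=1$ step more explicitly than the paper does.
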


\begin{proof}
Integrating with respect to product metric gives
\[
      \int_{\Omega}|\nabla \varphi_\delta|^2 
      \leq \int_{\Sigma}\int_{D_\delta\setminus D_{\delta^2}}|\nabla \varphi_\delta|^2
       = \frac{c(n)|\Sigma|}{(\ln\delta)^2}\int_{\delta^2}^\delta r^{n-m-3}\ dr 
       = c(n) |\Sigma| \, \delta(n)
\]
where $\delta(n)=-1/\ln \delta$ when $n=m+2$ and $\delta(n)=(\delta^{n-m-2}-\delta^{2n-2m-4})/(n-m-2)(\ln\delta)^2$ when $n>m+2$. Since $\delta(n) \rightarrow 0$ as $\delta \rightarrow 0$, this completes the proof.
\end{proof}

We now prove our main theorem, that given a compact Riemannian manifold with boundary $(\Omega^n,g)$, surgeries of dimension $m$ (see Definition $\ref{definition}$) can be performed while keeping the Steklov eigenvalues nearly unchanged, provided $1 \leq m \leq n-2$.

\begin{customthm}{\ref{higherorder}} 
If $1 \leq m \leq n-2$, then $\lim_{\delta\rightarrow 0^+}\sigma_j(\Omega_\delta)=\sigma_j(\Omega)$ for $j=0, 1, 2, \ldots$.
\end{customthm}

\begin{proof}
Let $u^0_{\delta}, u^1_{\delta}, u^2_{\delta}, \ldots$ be $L^2(\partial \Omega_\delta)$-orthonormal Steklov eigenfunctions
such that $u^j_{\delta}$ is a Steklov eigenfunction of $\Omega_\delta$ with eigenvalue $\sigma_{j}(\Omega_\delta)$, 
\[
      \begin{cases} \Delta u^j_\delta=0  & \text{in}\ \Omega_\delta \\
      \frac{\partial u^j_\delta}{\partial \eta}=\sigma_j(\Omega_\delta)u^j_\delta  &\text{on}\ \partial \Omega_\delta.
      \end{cases}
\]

\begin{claim}\label{uniformbound}
For any $j\in \mathbb{N}$,  $\sigma_j(\Omega_\delta)$ is uniformly bounded from above for small $\delta.$ 
\end{claim}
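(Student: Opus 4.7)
The plan is to apply the min-max characterization \eqref{min-max} using an explicit $(j+1)$-dimensional test subspace of $H^1(\Omega_\delta)$ built from the Steklov eigenfunctions of the ambient manifold $\Omega$. Let $\phi_0,\phi_1,\dots,\phi_j$ be an $L^2(\partial\Omega)$-orthonormal system of Steklov eigenfunctions on $\Omega$ with eigenvalues $\sigma_0(\Omega)\le\dots\le\sigma_j(\Omega)$. Since each $\phi_i\in H^1(\Omega)$ and $\Omega_\delta\subset\Omega$, the restrictions lie in $H^1(\Omega_\delta)$. Set $E=\mathrm{span}\{\phi_0|_{\Omega_\delta},\dots,\phi_j|_{\Omega_\delta}\}$. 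I would first check that $\dim E=j+1$: each $\phi_i$ is harmonic in $\Omega$, so if $\sum c_i\phi_i\equiv 0$ on the open set $\Omega_\delta$, unique continuation for harmonic functions forces $\sum c_i\phi_i\equiv 0$ on $\Omega$, whence all $c_i=0$.

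Next, I would bound the Rayleigh quotient uniformly over $E$. For $u=\sum_{i=0}^j c_i\phi_i$ with $\sum c_i^2=1$ (so $\|u\|_{L^2(\partial\Omega)}=1$), the numerator is controlled by monotonicity of the Dirichlet integral,
\[
\int_{\Omega_\delta}|\nabla u|^2 \;\le\; \int_\Omega|\nabla u|^2 \;=\;\sum_{i=0}^j c_i^2\,\sigma_i(\Omega)\;\le\;\sigma_j(\Omega).
\]
For the denominator, use that $\partial\Omega_\delta\supset\partial\Omega\setminus L_\delta$ to write
\[
\int_{\partial\Omega_\delta} u^2 \;\ge\; 1-\int_{\partial\Omega\cap L_\delta} u^2.
\]
Since $E$ is finite dimensional and each $\phi_i$ is smooth up to $\partial\Omega$, the quantity $B:=\sup\{\|u\|_{L^\infty(\partial\Omega)}:u\in E,\ \|u\|_{L^2(\partial\Omega)}=1\}$ is finite, so $\int_{\partial\Omega\cap L_\delta}u^2\le B^2|\partial\Omega\cap L_\delta|$.

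It remains to observe that $|\partial\Omega\cap L_\delta|\to 0$ as $\delta\to 0^+$: if $\partial\Sigma=\emptyset$ the intersection is empty for small $\delta$, while if $\partial\Sigma\ne\emptyset$, the assumption that $\Sigma$ meets $\partial\Omega$ orthogonally along $\partial\Sigma$ and the product-metric normal form used in Lemma~\ref{vanish} give $|\partial\Omega\cap L_\delta|=O(\delta^{n-m})$, which vanishes because $n-m\ge 2$. Consequently, for all sufficiently small $\delta$, $\int_{\partial\Omega_\delta}u^2\ge 1/2$ uniformly over the unit sphere of $E$, and the min-max principle \eqref{min-max} yields
\[
\sigma_j(\Omega_\delta)\;\le\;\sup_{u\in E\setminus\{0\}}\frac{\int_{\Omega_\delta}|\nabla u|^2}{\int_{\partial\Omega_\delta}u^2}\;\le\;2\sigma_j(\Omega),
\]
which is the desired uniform upper bound.

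No serious obstacle is expected here; the argument is essentially a one-sided comparison. The only points needing mild care are the linear-independence step (resolved by unique continuation) and the uniform boundary-$L^\infty$ bound on the finite-dimensional space $E$, both standard. Note that no cutoff function is needed for this claim — cutoffs and Lemmas~\ref{vanish}--\ref{higher} will enter only when proving the matching lower bound $\liminf_{\delta\to 0}\sigma_j(\Omega_\delta)\ge\sigma_j(\Omega)$ in the remainder of the theorem.
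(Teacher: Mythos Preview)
Your proof is correct. Both you and the paper invoke the min-max characterization \eqref{min-max} with a fixed $(j+1)$-dimensional test space, but the choices differ. The paper simply picks \emph{any} $j+1$ functions $f_1,\dots,f_{j+1}\in H^1(\Omega)$ supported in $\Omega_{r_0}$ and linearly independent on $\partial\Omega$; then for $\delta<r_0$ the Rayleigh quotient on $\Omega_\delta$ equals the Rayleigh quotient on $\Omega_{r_0}$, which is manifestly $\delta$-independent. This avoids any appeal to unique continuation, $L^\infty$ bounds, or volume estimates for $\partial\Omega\cap L_\delta$. Your route instead uses the eigenfunctions of $\Omega$ as test functions and controls the error in the Rayleigh quotient as $\delta\to 0$; this costs a bit more (unique continuation for linear independence, smoothness of eigenfunctions, and the $O(\delta^{n-m})$ volume bound), but in return yields the explicit constant $\sigma_j(\Omega_\delta)\le 2\sigma_j(\Omega)$ rather than an unspecified $\Lambda_j$. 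Either approach suffices here, since only a crude uniform bound is needed to run the compactness argument that follows.
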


\begin{proof}
Let $\{ f_1, \ldots, f_{j+1}\} \subset H^1(\Omega)$ be $j+1$ functions on $\Omega$ with support in $\Omega_{r_0}$ (where $r_0$ is as in Lemma \ref{vanish}), that are linearly independent on $\partial \Omega$. Let $E=\operatorname{span}\{f_1,\cdots,f_{j+1}\}$. Then if $\delta < r_0$, any function in $E$ is a valid test function for the min-max variational characterization (\ref{min-max}) of $\sigma_j(\Omega_\delta)$, and so
\[
     \sigma_j(\Omega_\delta)
     \leq \sup_{u\in E \setminus \{0 \}}\frac{\int_{\Omega_\delta}|\nabla u|^2}{\int_{\partial \Omega_\delta}u^2}
     = \sup_{u\in E \setminus \{0 \}}\frac{\int_{\Omega_{r_0}}|\nabla u|^2}{\int_{\partial \Omega_{r_0}}u^2}
            \leq \Lambda_j
\]
where $\Lambda_j$ is independent of $\delta$.
\end{proof}

Since $u_\delta^j$ is a Steklov eigenfunction of $\Omega_\delta$ with eigenvalue $\sigma_j(\Omega_\delta)$, 
\begin{equation} \label{energy-bound}
       \int_{\Omega_\delta} |\nabla u_\delta^j|^2 = \sigma_j(\Omega_\delta) \int_{\Omega_\delta} (u_\delta^j)^2 
       = \sigma_j(\Omega_\delta)  \leq \Lambda_j.
\end{equation}
By (\ref{energy-bound}) and since $\|u_\delta^j\|_{L^2(\partial \Omega_\delta)}=1$, by standard theory,
\begin{equation} \label{L^2-bound}
     \|u_\delta^j\|_{L^2(K)} \leq C(\Lambda_j, K)  
     \end{equation}
for any compact subset $K$ of $\Omega \setminus \Sigma$.

Elliptic boundary estimates ({\cite[Theorem 6.29]{tru}}) give bounds 
\[
       \|u_\delta^j \|_{C^{2,\alpha}(K)} \leq C \| u_\delta^j \|_{C^0(K)}
\]
for any compact subset $K$ of $\Omega \setminus \Sigma$ for all sufficiently small $\delta$, where $C=C(j,\alpha,\Lambda_j, K)$. 
By Sobolev embedding and interpolation inequalities (\cite[Theorem 5.2]{AF}, \cite[(7.10)]{tru}), \
\[
     \|u_\delta^j \|_{C^0(K)} \leq C \left( \varepsilon\|u_\delta^j\|_{C^{2}(K)} 
                                                   + \varepsilon^{-\mu} \|u_\delta^j \|_{L^2(K)} \right)
\]
where $\varepsilon >0$ can be taken arbitrarily small, $\mu>0$ depends on $n$, and $C$ depends on $K$. Hence $\|u_\delta^j \|_{C^{2,\alpha}(K)} \leq C$ with $C$ independent of $\delta$.
By the Arzela-Ascoli theorem and a diagonal sequence argument, there exists a sequence $\delta_i \rightarrow 0$ such that for all $j$, $u_{\delta_i}^j$ converges in $C^2(K)$ on compact subsets $K \subset \Omega \setminus \Sigma$ to a harmonic function $u^j$ on $\Omega \setminus \Sigma$, satisfying 
\[
       \frac{\partial u^j}{\partial \eta}=\sigma^j u^j 
       \quad \mbox{ on } \quad \partial \Omega \setminus \partial \Sigma
\]
with $\sigma^j =\lim_{i \rightarrow \infty} \sigma_j(\Omega_{\delta_i})$.

\begin{claim}\label{extension}
For each $j\geq 1\in \mathbb{N}$,  $u^j$ can be extended to a Steklov eigenfunction of $\Omega$ with eigenvalue $\sigma^j$.
\end{claim}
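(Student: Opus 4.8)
The plan is to show that the limit function $u^j$ constructed on $\Omega \setminus \Sigma$ extends across $\Sigma$ to a genuine Steklov eigenfunction on all of $\Omega$ with eigenvalue $\sigma^j$. The natural first step is to establish that $u^j \in H^1(\Omega)$. We already know $u^j$ is harmonic on $\Omega \setminus \Sigma$ and that the energies $\int_{\Omega_\delta}|\nabla u^j_\delta|^2$ are uniformly bounded by $\Lambda_j$; passing to the limit on compact subsets of $\Omega \setminus \Sigma$ gives $\int_{\Omega \setminus \Sigma}|\nabla u^j|^2 \leq \Lambda_j < \infty$. To control the $L^2$ norm near $\Sigma$, I would apply Lemma \ref{vanish}: the hypotheses of that lemma are exactly the uniform bounds \eqref{L^2-bound} and \eqref{energy-bound} (with the roles played uniformly in $\delta$), so \eqref{interior annulus estimate} gives that $\int_{\Omega_\delta \setminus \Omega_s}(u^j_\delta)^2$, hence in the limit $\int_{L_s \setminus \Sigma}(u^j)^2$, tends to $0$ as $s \to 0$. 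Combined with the $L^2(K)$ bound on compact $K \subset \Omega \setminus \Sigma$, this shows $u^j \in L^2(\Omega)$, and together with the gradient bound, $u^j \in H^1(\Omega)$. (Since $\Sigma$ has codimension at least $2$, it has zero $H^1$-capacity, so the $H^1$ extension is automatically the weak one and $u^j$ is weakly harmonic across $\Sigma$; alternatively one argues directly using the cut-off functions $\varphi_\delta$ of Lemma \ref{higher}.)

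Next I would verify that $u^j$ is weakly harmonic on all of $\Omega$ and satisfies the Steklov condition weakly on $\partial \Omega$. For any test function $w \in H^1(\Omega)$, write $\int_\Omega \nabla u^j \cdot \nabla w = \int_\Omega \nabla u^j \cdot \nabla(\varphi_\delta w) + \int_\Omega \nabla u^j \cdot \nabla((1-\varphi_\delta)w)$. On the support of $\varphi_\delta$ the function $u^j$ is harmonic in the classical sense away from $\Sigma$, so integration by parts produces only a boundary term $\int_{\partial\Omega}\sigma^j u^j \varphi_\delta w$; the second term is estimated by Cauchy--Schwarz using $\int_\Omega |\nabla \varphi_\delta|^2 \to 0$ (Lemma \ref{higher}) together with the now-established facts that $u^j \in L^2$ near $\Sigma$ and $w \in H^1$, and by the fact that $1-\varphi_\delta$ is supported in the shrinking region $\{r \leq \delta\}$. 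Letting $\delta \to 0$ yields $\int_\Omega \nabla u^j \cdot \nabla w = \sigma^j \int_{\partial\Omega} u^j w$ for all $w \in H^1(\Omega)$, i.e. $u^j$ is a weak Steklov eigenfunction on $\Omega$ with eigenvalue $\sigma^j$, provided $u^j \not\equiv 0$ on $\partial\Omega$.

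The remaining point — which is really the heart of the argument and what I expect to be the main obstacle — is to rule out the degenerate possibility that $u^j$ vanishes identically on $\partial\Omega$ (equivalently, that $\|u^j_\delta\|_{L^2(\partial\Omega_\delta)}$ escapes to the neck $T_\delta$ in the limit). Here I would use the ``no concentration'' Lemma \ref{vanish} in full force: parts \eqref{neck estimate} and \eqref{boundary annulus estimate} say that neither $\int_{T_\delta}(u^j_\delta)^2$ nor $\int_{(L_s \setminus L_\delta)\cap \partial\Omega}(u^j_\delta)^2$ contributes in the limit. Since $\partial\Omega_\delta = (\partial\Omega \setminus L_\delta) \cup T_\delta$ and $\|u^j_\delta\|_{L^2(\partial\Omega_\delta)} = 1$, these estimates force $\int_{\partial\Omega \setminus L_s}(u^j_\delta)^2 \to 1 - o_s(1)$, so $u^j$ satisfies $\int_{\partial\Omega}(u^j)^2 = 1$ (taking $s \to 0$ after $\delta \to 0$), in particular $u^j \not\equiv 0$ on $\partial\Omega$. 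The same dominated-convergence bookkeeping shows the eigenfunctions $u^0, u^1, \ldots$ remain $L^2(\partial\Omega)$-orthonormal, which is what will be needed afterwards to conclude $\sigma^j \geq \sigma_j(\Omega)$ and hence, together with the reverse inequality from using genuine eigenfunctions of $\Omega$ as test functions on $\Omega_\delta$, to finish the proof of Theorem \ref{higherorder}. The delicate part throughout is that $s$ and $\delta$ must be sent to zero in the right order and that one needs the uniform-in-$\delta$ hypotheses of Lemma \ref{vanish} to hold for the whole sequence simultaneously, which is guaranteed by \eqref{energy-bound} and \eqref{L^2-bound}.
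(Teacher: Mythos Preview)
Your proposal is correct and follows essentially the same route as the paper: first establish $u^j\in H^1(\Omega)$ using the uniform energy bound \eqref{energy-bound} together with the non-concentration estimate \eqref{interior annulus estimate} of Lemma~\ref{vanish}, then use the logarithmic cut-off $\varphi_\delta$ of Lemma~\ref{higher} to pass from the weak Steklov identity on $\Omega\setminus\Sigma$ to the one on all of $\Omega$. One small technical remark: when you test against a general $w\in H^1(\Omega)$, the cross term $\int_\Omega w\,\nabla u^j\cdot\nabla\varphi_\delta$ is not obviously controlled by Cauchy--Schwarz unless $w\nabla u^j\in L^2$; the paper handles this by restricting to test functions $\psi\in W^{1,2}\cap L^\infty(\Omega)$ first (so that $\psi\nabla u^j\in L^2$ is immediate) and then invoking density, and you should do the same. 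Finally, the non-triviality and orthonormality arguments you include in your last paragraph are in the paper separated out as the subsequent Claim~\ref{meanvaluezero}, but the content and method are identical to what you wrote.
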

\begin{proof}
First observe that $u^j\in H^1(\Omega\setminus\Sigma)$. Fix the compact subset $\Omega_{\delta_N}$ for some large $N$. Then $\|u^j_{\delta_i}\|^2_{L^2(\Omega_{\delta_i})}=\|u^j_{\delta_i}\|^2_{L^2(\Omega_{\delta_N})}+\|u_{\delta_i}^j\|^2_{L^2{(\Omega_{\delta_i}\setminus \Omega_{\delta_N}})}$ is uniformly bounded independent of $i$. The first term is bounded by (\ref{L^2-bound})  and the second term is bounded by $(\ref{interior annulus estimate})$ of Lemma $\ref{vanish}$. This together with $(\ref{energy-bound})$ shows that $\|u_{\delta_i}^j\|_{H^1(\Omega_{\delta_i})}$ is uniformly bounded. Thus $u^j\in H^1(\Omega\setminus\Sigma)$.

For any function $\psi\in W^{1,2}\cap L^{\infty}(\Omega)$, define $\psi_\delta=\psi\varphi_\delta$ where $\varphi_\delta$ is defined in Lemma ${\ref{higher}}$. Since $u^j$ is a harmonic function on $\Omega\setminus \Sigma$ and satisfies $\frac{\partial u^j}{\partial \eta}=\sigma^j u^j$ on $\partial \Omega \setminus \partial \Sigma$, and $\psi_\delta$ vanishes near $\Sigma$, we have 
\[
       \int_{\Omega\setminus\Sigma}\nabla u^j\cdot \nabla \psi_\delta
       =\sigma^j\int_{\partial \Omega\setminus \partial \Sigma}u^j\psi_\delta,
\]
and so
\begin{equation} \label{newequation1}
     \int_{\Omega}\psi\nabla u^j\cdot\nabla \varphi_\delta+\varphi_\delta\nabla u^j\cdot \nabla \psi
     =\sigma^j\int_{\partial \Omega}u^j\psi_\delta.
\end{equation}
From H\"{o}lder's inequality it follows that
\[
     \left |\int_{\Omega}\psi \nabla u^j \cdot \nabla\varphi_\delta \right|
     \leq \int_{\Omega}|\psi\nabla u^j||\nabla \varphi_\delta|
     \leq \left(\int_\Omega|\psi\nabla u^j|^2\right)^{\frac{1}{2}}\left(\int_\Omega|\nabla \varphi_\delta|^2\right)^{\frac{1}{2}}.
\]
Therefore, by Lemma $\ref{higher}$
\[
       \int_{\Omega}\psi\nabla u^j\cdot\nabla\varphi_\delta\rightarrow 0  \ \ \text{as}\ \  \delta\rightarrow 0.
\]
Since $|\psi_\delta|\leq |\psi|\in L^\infty(\Omega)$ and $u^j\in H^1(\Omega\setminus \Sigma)$, taking limits both sides of $(\ref{newequation1})$ and applying the dominated convergence theorem, we obtain
\[
       \int_{\Omega}\nabla u^j\cdot \nabla \psi=\sigma^j \int_{\partial \Omega}u^j\psi
\]
which implies the claim.
\end{proof}

\begin{claim}\label{meanvaluezero}
The set of functions $\{u^j\}$ is orthonormal on the boundary of $\Omega$, i.e., 
\[
    \int_{\partial \Omega} u^ju^l=\delta_{jl}  \qquad  \mbox{ for } \; j,l\geq 0.
\]
\end{claim}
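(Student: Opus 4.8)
The plan is to show that the $L^2(\partial\Omega)$ inner products of the limiting functions $u^j$ are inherited from the orthonormality of the approximating eigenfunctions $u^j_\delta$ on $\partial\Omega_\delta$. First I would split the boundary integral $\int_{\partial\Omega_\delta} u^j_\delta u^l_\delta = \delta_{jl}$ into the part over $\partial\Omega \setminus L_s$ (a fixed compact subset of $\partial\Omega \setminus \partial\Sigma$), the part over $(L_s \setminus L_\delta)\cap\partial\Omega$ (the annular collar near $\partial\Sigma$ on the original boundary), and the part over $T_\delta$ (the newly created neck boundary). The strategy is: the first piece converges to $\int_{\partial\Omega\setminus L_s} u^j u^l$ by the $C^2$-convergence on compact subsets of $\Omega\setminus\Sigma$ established just before Claim~\ref{extension}; the second and third pieces are shown to be uniformly small, independent of $\delta$, by shrinking $s$.

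The key estimates for the small pieces come directly from Lemma~\ref{vanish}. For the neck term, apply Cauchy--Schwarz: $\left|\int_{T_\delta} u^j_\delta u^l_\delta\right| \le \|u^j_\delta\|_{L^2(T_\delta)}\|u^l_\delta\|_{L^2(T_\delta)}$, and by \eqref{neck estimate} each factor tends to $0$ as $\delta\to 0$ (the energy bound \eqref{energy-bound} together with the $L^2$ bound \eqref{L^2-bound} on the fixed annulus $\Omega_{r_0/2}\setminus\Omega_{r_0}$ supplies the hypothesis of Lemma~\ref{vanish}). For the collar term, $\left|\int_{(L_s\setminus L_\delta)\cap\partial\Omega} u^j_\delta u^l_\delta\right| \le \|u^j_\delta\|_{L^2((L_s\setminus L_\delta)\cap\partial\Omega)}\|u^l_\delta\|_{L^2((L_s\setminus L_\delta)\cap\partial\Omega)}$, and \eqref{boundary annulus estimate} gives that this is small uniformly in $\delta$ once $s$ is small. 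One must also check the corresponding statement for the limit, namely that $\int_{(L_s\setminus\partial\Sigma)\cap\partial\Omega}(u^j)^2 \to 0$ as $s\to 0$; this follows because $u^j\in H^1(\Omega\setminus\Sigma)$ (proved in Claim~\ref{extension}) so its trace is in $L^2(\partial\Omega)$, hence integrable, and the collar regions shrink to the measure-zero set $\partial\Sigma$.

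Putting these together: fix $\varepsilon>0$, choose $s$ small so the collar contributions (both for $u^j_\delta$ uniformly in $\delta$, and for the limit $u^j$) are each less than $\varepsilon$; then let $\delta\to 0$ along the subsequence $\delta_i$, using $C^2$-convergence on $\partial\Omega\setminus L_s$ for the main term and \eqref{neck estimate} for the neck term, to conclude $\left|\int_{\partial\Omega} u^j u^l - \delta_{jl}\right| \le C\varepsilon$. Since $\varepsilon$ is arbitrary, the claim follows. In particular, taking $j=l=0$ shows $\int_{\partial\Omega}(u^0)^2 = 1$, so $u^0$ is not identically zero and, being harmonic with $\partial u^0/\partial\nu = 0$ (as $\sigma^0 = \lim \sigma_0(\Omega_{\delta_i}) = 0$), must be a nonzero constant; the orthogonality relations $\int_{\partial\Omega} u^0 u^l = 0$ for $l\ge 1$ then say each $u^l$ has mean zero on $\partial\Omega$.

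I expect the main obstacle to be the collar term, i.e.\ handling the region $(L_s\setminus L_\delta)\cap\partial\Omega$ carefully: one needs the estimate \eqref{boundary annulus estimate} to hold uniformly in $\delta$ for fixed small $s$ (which it does, as stated in Lemma~\ref{vanish}), and one needs to match this with the corresponding vanishing for the limiting trace, being careful that the traces $u^j_\delta|_{\partial\Omega}$ and $u^j|_{\partial\Omega}$ agree in the limit on the fixed compact set $\partial\Omega\setminus L_s$ only through the interior $C^2$ convergence, not through any boundary trace theorem (which would require $H^1$ bounds up to $T_\delta$ that we do not have). Organizing the three-way splitting so that each error is controlled by the right tool is the crux; the rest is routine application of Lemma~\ref{vanish} and Cauchy--Schwarz.
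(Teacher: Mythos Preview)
Your proposal is correct and follows essentially the same three-way splitting of $\partial\Omega_\delta$ as the paper, using Lemma~\ref{vanish} (via Cauchy--Schwarz) for the neck and collar pieces and $C^2$-convergence on compact subsets for the main piece. The only cosmetic difference is that the paper controls $\int_{L_s\cap\partial\Omega} u^j u^l$ by invoking the smoothness of $u^j$ coming from Claim~\ref{extension} (Steklov eigenfunctions are smooth), whereas you argue via the $L^2$-trace of $u^j\in H^1$ and absolute continuity; both are valid and the overall structure is identical.
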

\begin{proof}  
Let $\epsilon >0$. Since $u^j$ are Steklov eigenfunctions by Claim $\ref{extension}$, and therefore smooth by the regularity theory, there exists sufficiently small $s>0$ such that $|\int_{L_s\cap\partial \Omega}u^ju^l|<\epsilon/4$.  According to H\"older's inequality and $(\ref{neck estimate})$ and $(\ref{boundary annulus estimate})$ of Lemma 2.2, we may furthermore assume that $s$ is chosen sufficiently small such that $|\int_{T_{\delta_i}}u^j_{\delta_i}u_{\delta_i}^l| < \epsilon/4$ and $|\int_{(L_s\setminus L_{\delta_i})\cap\partial\Omega}u^j_{\delta_i}u_{\delta_i}^l|<\epsilon/4$ hold for $i$ large enough.  The fact that $u^j_{\delta_i}$ uniformly converges to $u^j$ in any compact subset implies that  $|\int_{\partial \Omega\setminus L_s}u^j_{\delta_i}u_{\delta_i}^l-\int_{\partial \Omega\setminus L_s}u^ju^l|<\epsilon/4$ for $i$ large enough. 
Hence it follows that 
\begin{align*}
    \left|\int_{\partial\Omega}u^ju^l-\delta_{jl}\right|
    &<\left|\int_{\partial \Omega\setminus L_s}u^ju^l-\delta_{jl}\right|+\epsilon/4 \\
    &<\left|\int_{\partial \Omega\setminus L_s}u^j_{\delta_i}u_{\delta_i}^l-\delta_{jl}\right|+\epsilon/2 \\
    &=\left|\int_{\partial\Omega_{\delta_i}}u^j_{\delta_i}u_{\delta_i}^l
    -\int_{(L_s\setminus L_{\delta_i})\cap\partial\Omega}u^j_{\delta_i}u_{\delta_i}^l
    -\int_{T_{\delta_i}}u^j_{\delta_i}u_{\delta_i}^l-\delta_{jl}\right|+\epsilon/2 \\
    &<\left|\int_{(L_s\setminus L_{\delta_i})\cap\partial\Omega}u^j_{\delta_i}u_{\delta_i}^l\right|
    +\left|\int_{T_{\delta_i}}u^j_{\delta_i}u_{\delta_i}^l\right|+\epsilon/2\\
    &<\epsilon/4+\epsilon/4+\epsilon/2=\epsilon
\end{align*}
Since $\epsilon >0$ is arbitrary this yields the desired result.
\end{proof}

Theorem $\ref{higherorder}$ is proved if we can show that  $\sigma^j=\sigma_j(\Omega)$. This is what we shall do in the following, by applying induction. It is easy to see that $\lim_{\delta\rightarrow 0}\sigma_0(\Omega_\delta)=\sigma_0(\Omega)$ since they are always zero. Hereafter, we fix $j$ and assume that for each $0\leq i\leq j-1$, $u^i$ is a Steklov eigenfunction of $\Omega$ with eigenvalue $\sigma_i(\Omega)$. 
First, we show that $\sigma_j(\Omega)\leq \sigma^j.$ To see this, from Claim $\ref{meanvaluezero}$ it follows that
\[
              \int_{\partial \Omega} u^j=\int_{\partial \Omega}u^1u^j=\cdots=\int_{\partial \Omega}u^{j-1}u^j=0,
\]
which guarantees that $u^j$ is an admissible test function for $\sigma_j(\Omega)$, and thus $\sigma^j\geq \sigma_j(\Omega).$ To prove the theorem, we need to prove the reverse inequality.

Denote the $j$-th Steklov eigenfunction of $\Omega$ by $v$, and define
\[
    f=v-\sum_{i=0}^{j-1} \left(\int_{\partial \Omega_\delta}v u_{\delta}^i \right) u_\delta^i.
\]
In fact, $f$ is the projection of $v$ to the orthogonal complement of first $j$ eigenspaces of $\Omega_\delta$. Therefore,
\[
       \int_{\partial\Omega_\delta}f u_\delta^i=0, \ \text{for}\ i=0,\cdots,j-1.
\]
Thus $f$ is an admissible test function for $\sigma_j(\Omega_{\delta}).$

On the one hand, let's estimate the denominator of the Rayleigh quotient,
\begin{equation}\label{denominator}
     \int_{\partial \Omega_{\delta}}f^2
     =\int_{\partial \Omega_{\delta}}  v^2-\sum_{i=0}^{j-1} \left(\int_{\partial \Omega_{\delta}}v u_{\delta}^i\right)^2 .
\end{equation}
It's clear that
\[
     \lim_{\delta\rightarrow 0}\int_{\partial \Omega_{\delta}}  v^2=\int_{\partial \Omega}  v^2 
\]
and
\[
    \lim_{\delta\rightarrow 0}\int_{\partial \Omega_{\delta}}v u_{\delta}^i
    = \int_{\partial \Omega}v u^i=0\ \ \text{for} \ i=0,\cdots,j-1.
\]
On the other hand, we may estimate the numerator of the Rayleigh quotient as follows
\begin{equation} \label{numerator}
    \int_{\Omega_{\delta}}|\nabla f|^2
    = \int_{\Omega_{\delta}}|\nabla v|^2 
     + \sum_{i=1}^{j-1} \left(\int_{\partial \Omega_{\delta}} v u_{\delta}^i \right)^2
       \int_{\Omega_{\delta}} |\nabla u_{\delta}^i|^2\\ 
       -2\sum_{i=1}^{j-1}\left (\int_{\partial \Omega_{\delta}} v u_{\delta}^i \right) 
       \int_{\Omega_{\delta}}\langle\nabla v,\nabla u_{\delta}^i\rangle.
\end{equation}
Similarly,
\[
    \int_{\Omega_\delta}|\nabla u_\delta^i|^2
    =\sigma_1(\Omega_\delta)
    <C_1\]
    and
    \[
    \int_{\Omega_{\delta}}\langle\nabla v,\nabla u_{\delta}^i\rangle
    \leq C_2 \left (\int_{\Omega_\delta}|\nabla u_\delta^i|^2 \right)^{\frac{1}{2}}\leq C_3,
\]
where all the constant $C_1, C_2, C_3$ are independent of $\delta$ for small $\delta.$

In the end, we combine the estimates $(\ref{denominator})$, $(\ref{numerator})$ in the characterization $(\ref{characterization})$ and take the limit to get
\[
     \limsup_{\delta\rightarrow 0}\sigma_j(\Omega_\delta)
     \leq \lim_{\delta\rightarrow 0}\frac{\int_{\Omega_\delta}|\nabla f|^2}{\int_{\partial \Omega_\delta}f^2} 
     =\frac{\int_{\Omega}|\nabla v|^2}{\int_{\partial \Omega}v^2}=\sigma_j(\Omega).
\]
This completes the proof of the theorem.
\end{proof}

\section{Some results on shape optimization for Steklov eigenvalues in $\mathbb{R}^n$} \label{ballnotmaximizer}

For any bounded domain $\Omega \subset \mathbb{R}^n$ there are upper bounds on all normalized Steklov eigenvalues, $\sigma_j(\Omega)|\partial \Omega|^{\frac{1}{n-1}} \leq C(n) j^{\frac{2}{n}}$ (\cite{ceg}). An explicit upper bound for the first normalized Steklov eigenvalue was given in \cite[Proposition 2.1]{FS2}, however the upper bound is not expected to be sharp. It is an open question to determine the sharp upper bound.
\begin{question}[\cite{GP2}]
On which domain (or in the limit of which sequence of domains) is the supremum of $\bar{\sigma}_j(\Omega)$ over all bounded domains $\Omega \subset \mathbb{R}^n$ realized? 
\end{question}

A consequence of Theorem \ref{higherorder} is that the supremum of the $j$-th normalized eigenvalue among all domains is the same as the supremum among domains having relatively simple topology; that is, among domains which can be obtained by performing surgeries up to codimension two (as discussed in Section \ref{higherdimensionalsurgery}).

Another immediate consequence of our surgery result is that when $n \geq 3$ the unit ball is not the maximizer for $\bar{\sigma}_j(\Omega)$, even among contractible domains. This was proved for $j=1$ in \cite[Theorem 1.1]{FS2}.

\begin{customcor}{\ref{maintheorem1}}
For $n \geq 3$, there exists a contractible domain $\Omega^*$ in $\mathbb{R}^n$ such that for any $j\geq 1$,
$
     \bar{\sigma}_j(\Omega^*)>\bar{\sigma}_j(\mathbb{B}^n).
$
\end{customcor}

\begin{proof}
Let $\mathbb{B}_\epsilon^n$ denote the ball of radius $\epsilon$ centred at the origin in $\mathbb{R}^n$. It was shown in \cite[Proposition 3.1]{FS2} that for $\epsilon >0$ sufficiently small, $\bar{\sigma}_j(\mathbb{B}^n\setminus \mathbb{B}_\epsilon^n) >\bar{\sigma}_j(\mathbb{B}^n)$. We may then perform a 1-dimensional surgery on $\mathbb{B}^n\setminus \mathbb{B}_\epsilon^n$ to construct a contractible domain, while changing the Steklov eigenvalues by an arbitrarily small amount. For example, let $\mathbb{B}^n_{\epsilon,\delta}$ denote the domain obtained by removing a $\delta$-tube around a radial segment (see Figure $\ref{balltube}$). Then since $n \geq 3$, by Theorem $\ref{higherorder}$, for sufficiently small $\epsilon>0$ we have
\[ 
      \lim_{\delta\rightarrow 0}\bar{\sigma}_j(\mathbb{B}^n_{\epsilon,\delta})
      =\bar{\sigma}_j(\mathbb{B}^n\setminus \mathbb{B}_\epsilon^n)
      >\bar{\sigma}_j(\mathbb{B}^n). 
\]
Therefore, we can find a small $\delta>0$ such that $\bar{\sigma}_j(\mathbb{B}^n_{\epsilon,\delta})>\bar{\sigma}_j(\mathbb{B}^n).$  We let  $\Omega^{*}=\mathbb{B}^n_{\epsilon,\delta}$.
\end{proof}

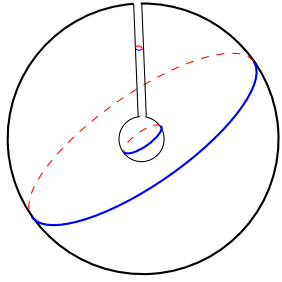
\begin{figure}[h!]
%[h!] can keep the figure where you put code in.
\begin{tikzpicture}[scale=0.6]

\begin{scope}

     \coordinate (0) at (0,0) ;
 
     \draw[thick] (0) + (-0.2, 3) arc (94:451:3) ;
     \draw[red,rotate=35,dashed] (0) ellipse (3 and 1) ;
 
    %big circle
 
     \draw[thick,blue,rotate=35] (0,0) +(180:3) arc (180:360:3 and 1) ;
     
     \draw[thin] (0)+(-0.06,0.5) arc (94:440:0.5);
     
     \draw[red,rotate=35,dashed] (0,0) ellipse (0.5 and 1/6) ;
 
    %small circle
 
     \draw[thick,blue,rotate=35] (0,0) +(180:0.5) arc (180:360:0.5 and 1/6) ;
     
     \draw(-0.2,3)--(-0.1,0.5);
     
     \draw(-0.02,3)--(0.08,0.5);
     
     \draw[red,thin] (0, 2) arc (20:140:0.1) ;
     
     \draw[blue,thin] (-0.15, 2) arc (225:315:0.1) ;

\end{scope}

\end{tikzpicture}
\caption{The construction of the contractible domain $\mathbb{B}^{n}_{\epsilon,\delta}$ in $\mathbb{R}^n$ for $n \geq 3$: $\delta$ is radius of the neck and $\epsilon$ is the radius of inner ball, we delete the neck and inner ball from unit ball}
\label{balltube}
\end{figure}

As discussed in the introduction, this result is not surprising, especially in light of \cite{GP1} which shows that the supremum of the $j$-th normalized eigenvalue among simply connected domains in $\mathbb{R}^2$ is achieved in the limit by a sequence of simply connected domains degenerating to the disjoint union of $j$ identical discs. 
We now consider contractible domains of this type in higher dimensions. %the analog of such domains in higher dimensions. 

Consider the necklace-like contractible domain $\Omega_{\varepsilon,l}^n$ in $\mathbb{R}^n$ which is the union of $l$ $n$-dimensional unit balls centred along a common axis and positioned such that adjacent balls overlap by a distance $\varepsilon^2$ along the axis; see Figure $\ref{figoverlapping}$. As $\varepsilon$ tends to zero, $\Omega_{\varepsilon,l}^n$ converges to $l$ identical unit balls touching tangentially at $l-1$ points, which we denote by $p_1,\cdots,p_{l-1}$  (see Figure $\ref{overlappinglimit}$). We denote $l$ disjoint unit balls by $\sqcup_l \mathbb{B}^n$.

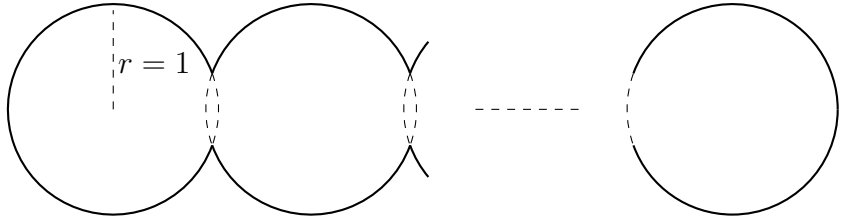
\begin{figure}[h!]
%[h!] can keep the figure where you put code in.
\begin{tikzpicture}[scale=0.7]

\begin{scope}
%circle 1
\draw [thick] (0,0) arc  (20:340:2cm);
% (0,0) is the starting point of arc, instead of center of arc 
\draw [dashed](0,0) arc  (20:-20:2cm);
%\draw [blue] (0.4,-0.6) node{$q$};

%circle 2
\draw [thick] (0,0) arc  (160:20:2cm);
\draw [dashed](0,0) arc (160:200:2cm);
\draw [thick] (0,-1.368) arc  (200:340:2cm);
\draw [dashed](3.75877,0) arc  (20:-20:2cm);

%circle 3
\draw [thick] (3.75877,0) arc (160:140:2cm);
\draw [dashed] (3.75877,0) arc (160:200:2cm);
\draw [thick] (3.75877,-1.368) arc (200:220:2cm);

%line

\draw [dashed] (5,-0.684)--(7,-0.684);

%circle 4
\draw [thick] (8,0) arc (160:0:2cm);
\draw [thick] (11.87938,-0.684) arc (0:-160:2cm);
\draw [dashed] (8,0) arc (160:200:2cm);

% now we put text in the picture
\draw[dashed] (-1.87938,-0.684)--(-1.87938,1.2);
\draw (-1.1,0.2) node{$r=1$};

%\draw (-0.1206,-0.684)-- (0.1206,-0.684);
%\draw[->] (0,-0.684)--(0.8,-0.684);
%\draw (0,-0.5) node{$\varepsilon$};

%\draw (3.6381,-0.684)-- (3.87938,-0.684);
%\draw (3.75,-0.5) node{$\varepsilon$};

\end{scope}

\end{tikzpicture}
\caption{The construction of $\Omega^n_{\varepsilon,l}$ in $\mathbb{R}^n$, $n \geq 2$: $l$ overlapping unit balls.}
\label{figoverlapping}
\end{figure}

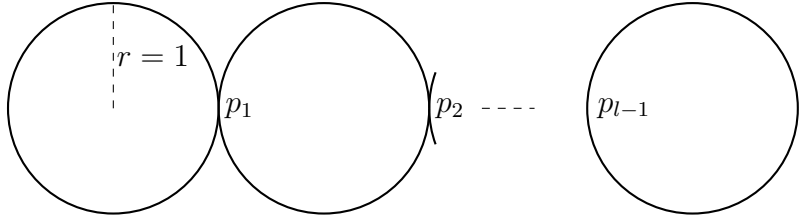
\begin{figure}[h!]
%[h!] can keep the figure where you put code in.
\begin{tikzpicture}[scale=0.7]

\begin{scope}
%circle 1
\draw [thick] (0,0) arc  (0:360:2cm);

%circle 2
\draw [thick] (4,0) arc (0:360:2cm);

%circle 3
\draw [thick] (4,0) arc (180:160:2cm);
\draw [thick] (4,0) arc (180:200:2cm);

\draw (0.4,0) node{$p_1$};
\draw (4.4,0) node{$p_2$};

\draw (7.7,0) node{$p_{l-1}$};

\draw [dashed] (5,0)--(6,0);

%circle 4
\draw [thick] (11,0) arc (0:360:2cm);

\draw [dashed](-2,0)--(-2,2);
\draw (-1.25,1) node{$r=1$};

\end{scope}

\end{tikzpicture}
\caption{Limit of $\Omega^n_{\varepsilon,l}$ in $\mathbb{R}^{n}$, $n \geq 2$, as $\varepsilon \rightarrow 0$.}
\label{overlappinglimit}
\end{figure}

Using similar ideas as in the proof of Theorem \ref{higherorder}, we show that the Steklov spectrum of $\Omega^n_{\varepsilon,l}$ converges to that of $\sqcup_l \mathbb{B}^n$ as $\varepsilon \rightarrow 0$.

\begin{theorem}\label{overlappingconvergence}
For every $j \geq 0$, the $j$-th normalized Steklov eigenvalue of $\Omega^n_{\varepsilon,l}$, where $n \geq 2$, converges to the $j$-th normalized Steklov eigenvalue of the disjoint union of $l$ unit balls as $\varepsilon$ tends to zero, i.e., 
\begin{equation}  \label{33333}
      \lim_{\varepsilon\rightarrow 0^+}\bar{\sigma}_j(\Omega^n_{\varepsilon,l})
      = \sigma_j(\sqcup_l \mathbb{B}^n)\cdot (l\cdot |\mathbb{S}^{n-1}|)^{\frac{1}{n-1}}.
\end{equation}
\end{theorem}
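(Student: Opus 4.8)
The plan is to adapt the proof strategy of Theorem \ref{higherorder} to the present degenerating family $\Omega^n_{\varepsilon,l}$, treating the union of the small necks around the tangency points $p_1,\dots,p_{l-1}$ as playing the role of the tube $L_\delta$. Since the boundary measure satisfies $|\partial \Omega^n_{\varepsilon,l}| \to l\cdot|\mathbb{S}^{n-1}|$ as $\varepsilon \to 0$ (the overlap regions have boundary area $O(\varepsilon^{n-1})$), equation $(\ref{33333})$ follows once we prove the unnormalized convergence $\lim_{\varepsilon\to 0^+}\sigma_j(\Omega^n_{\varepsilon,l}) = \sigma_j(\sqcup_l \mathbb{B}^n)$. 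First I would establish the analogue of Lemma \ref{vanish}: near each tangency point $p_k$, after a translation and rescaling the domain $\Omega^n_{\varepsilon,l}$ looks locally like two balls of radius $1$ overlapping by $\varepsilon^2$, so the ``neck'' cross-section at distance comparable to $\varepsilon$ from $p_k$ is an $(n-1)$-disc of radius $O(\varepsilon)$; a capacity/Dirichlet-energy comparison with harmonic functions on a spherical shell (exactly as in the proof of Lemma \ref{vanish}, but now the relevant codimension count gives the same logarithmic or power decay since we are cutting down by $2$ dimensions in the worst directions) yields that $L^2$ norms of energy-bounded functions do not concentrate near the $p_k$.

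Next I would carry out the compactness argument verbatim from the proof of Theorem \ref{higherorder}: let $u^j_\varepsilon$ be $L^2(\partial\Omega^n_{\varepsilon,l})$-orthonormal Steklov eigenfunctions with eigenvalue $\sigma_j(\Omega^n_{\varepsilon,l})$. A test-function bound as in Claim \ref{uniformbound} (using functions supported away from all the $p_k$) gives $\sigma_j(\Omega^n_{\varepsilon,l}) \leq \Lambda_j$ uniformly; elliptic interior and boundary estimates together with Arzel\`a--Ascoli and a diagonal argument produce a subsequence $\varepsilon_i \to 0$ along which $u^j_{\varepsilon_i}$ converges in $C^2_{loc}$ on compact subsets of $\sqcup_l \mathbb{B}^n \setminus \{p_1,\dots,p_{l-1}\}$ to a function $u^j$ harmonic on each ball and satisfying the Steklov condition on $\partial(\sqcup_l \mathbb{B}^n)$ minus the tangency points. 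Using a logarithmic cutoff as in Lemma \ref{higher} (whose energy tends to zero by the same computation, since we are removing $2$-codimensional sets), the removable-singularity argument of Claim \ref{extension} shows $u^j$ extends to a genuine Steklov eigenfunction of $\sqcup_l \mathbb{B}^n$ with eigenvalue $\sigma^j := \lim_i \sigma_j(\Omega^n_{\varepsilon_i,l})$, and the no-concentration lemma gives, as in Claim \ref{meanvaluezero}, that $\{u^j\}$ remains $L^2(\partial(\sqcup_l\mathbb{B}^n))$-orthonormal. Hence $\sigma^j \geq \sigma_j(\sqcup_l \mathbb{B}^n)$ by the variational characterization $(\ref{characterization})$.

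For the reverse inequality I would mimic the last part of the proof of Theorem \ref{higherorder}: take the $j$-th Steklov eigenfunction $v$ of $\sqcup_l \mathbb{B}^n$, restrict it to $\Omega^n_{\varepsilon,l}$ (the two domains agree outside the small overlap regions, and $v$ is smooth there, so this makes sense), Gram--Schmidt it against $u^0_\varepsilon,\dots,u^{j-1}_\varepsilon$ to get an admissible test function $f$ for $\sigma_j(\Omega^n_{\varepsilon,l})$, and estimate its Rayleigh quotient. The numerator converges to $\int_{\sqcup_l\mathbb{B}^n}|\nabla v|^2$ because the overlap regions have small volume and $v$ has bounded energy there, the cross terms $\int_{\partial\Omega^n_{\varepsilon,l}} v u^i_\varepsilon \to \int_{\partial(\sqcup_l\mathbb{B}^n)} v u^i = 0$, and the denominator converges to $\int_{\partial(\sqcup_l\mathbb{B}^n)} v^2$; this gives $\limsup_{\varepsilon\to 0}\sigma_j(\Omega^n_{\varepsilon,l}) \leq \sigma_j(\sqcup_l\mathbb{B}^n)$, completing the proof (and showing the limit exists without passing to a subsequence).

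The main obstacle is the geometric analysis near the tangency points: unlike in Theorem \ref{higherorder}, where $\Sigma$ was a fixed smooth submanifold and $L_\delta$ a genuine tubular neighbourhood with a clean product metric, here the ``submanifold'' being removed shrinks to the isolated points $p_k$ and the local geometry of two balls meeting with overlap $\varepsilon^2$ is not exactly a product. I expect the right move is to rescale by $\varepsilon$ near each $p_k$: in rescaled coordinates the domain converges to two half-spaces (tangent hyperplanes) separated by a slab of width $O(1)$, for which the capacity of the neck relative to the outer boundary is controlled, and one recovers a decay estimate of precisely the form in Lemma \ref{vanish} (with $m=1$ or $m=0$ effectively, so $1 \le m \le n-2$ needs $n\ge 3$ for the $m=1$ reading, but the $m=0$ reading covers $n=2$ as well — which is why the theorem includes $n\ge 2$). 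Making this rescaling argument rigorous, and checking uniformity of all constants in $\varepsilon$, is the technical heart of the proof.
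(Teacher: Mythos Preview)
Your overall architecture matches the paper's proof: reduce to unnormalized convergence, get uniform energy bounds and $C^2_{\mathrm{loc}}$ compactness away from the $p_k$, show the limits are genuine Steklov eigenfunctions via a log-cutoff removable-singularity argument, and then bound $\limsup \sigma_j$ by building a test function from an eigenfunction $v$ of $\sqcup_l\mathbb{B}^n$. Two points of divergence are worth flagging.

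For the no-concentration step, the paper does \emph{not} adapt the harmonic-extension trick of Lemma~\ref{vanish} or do any rescaling/blow-up at $p_k$. Instead it works directly on each unit ball: near a tangency point $p$ it takes the geodesic half-ball $\mathcal{B}_s(p)\subset\mathbb{B}^n$ and invokes the Poincar\'e-type inequalities
\[
\int_{\mathcal{B}_s(p)} u^2 \le C_1(n)\,s^2\int_{\mathcal{B}_s(p)}|\nabla u|^2,\qquad
\int_{\Gamma^2_s(p)} u^2 \le C_2(n)\,s\int_{\mathcal{B}_s(p)}|\nabla u|^2
\]
for functions vanishing on the inner hemisphere $\Gamma^1_s(p)$ (these are first Dirichlet--Neumann and Dirichlet--Steklov eigenvalue bounds, quoted from \cite{FS2019P}), combined with a cutoff $\zeta$ also taken from \cite{FS2019P}. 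Applying these on each of the two balls at each neck and summing gives the interior and boundary non-concentration directly. This entirely sidesteps the rescaling analysis you identify as the ``technical heart''.

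For the reverse inequality there is a genuine gap in your description. You cannot simply ``restrict $v$ to $\Omega^n_{\varepsilon,l}$'': in the overlap lens a point lies in \emph{two} balls, so $v$ takes two values there, and any single-valued choice produces a jump across an interior $(n-1)$-surface and hence fails to lie in $H^1$. The paper's fix is to use $v\varphi_\varepsilon$ (with $\varphi_\varepsilon$ the log-cutoff vanishing within distance $\varepsilon^2$ of each $p_k$) as a test function on $\Omega_{\varepsilon^2}$ rather than on $\Omega_\varepsilon$; since the overlap lens of $\Omega_{\varepsilon^2}$ sits inside the region where $\varphi_\varepsilon=0$, the product $v\varphi_\varepsilon$ is unambiguous and lies in $H^1(\Omega_{\varepsilon^2})$. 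After Gram--Schmidt against the $u^i_{\varepsilon^2}$, your Rayleigh-quotient estimate then goes through, with the extra $|\nabla\varphi_\varepsilon|^2$ term absorbed via~(\ref{energyvanish}).
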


\begin{remark}\label{remark5.2}
(1) The above theorem holds for more general overlapping domains. For example, in the construction we could replace the unit ball by the domain $\mathbb{B}_{\epsilon,\delta}^n$ from the proof of Corollary \ref{maintheorem1} (Figure $\ref{balltube}$). More generally, the same proof works for domains with smooth boundary overlapping at elliptic points on the boundary, and such that other intersections do not occur; (2) Comparing $(\ref{33333})$ with Theorem 1.3.1 in \cite{GP1}, the index of the Steklov eigenvalue doesn't have to be same as the number $l$ of balls we overlap.  (3) This result was proved in \cite[Example 3]{bgt}. However, we use a different approach here.
\end{remark}

\begin{proof}[Proof of Theorem $\ref{overlappingconvergence}$]
Since $|\partial \Omega^n_{\varepsilon,l}|$ converges to $l|\mathbb{S}^{n-1}|$, it suffices to prove that
\begin{equation} \label{eigenvalue-convergence}
        \lim_{\varepsilon\rightarrow 0^+}\sigma_j(\Omega^n_{\varepsilon,l})
        =\sigma_j(\sqcup_l \mathbb{B}^n).
\end{equation}
We fix $l\geq 2$ and $n\geq 2$, and without ambiguity, we simply use the notation $\Omega_\varepsilon$ for $\Omega^n_{\varepsilon,l}$ in the proof. 
We split the proof into four steps.

\textbf{Step 1: Uniform energy bound and uniform convergence in compact subsets.} 
We consider the Steklov eigenvalue problem on $\Omega_\varepsilon$. Let $\{u_\varepsilon^j\}_{j=0}^\infty$ be $L^2(\partial \Omega_\varepsilon)$-orthonormal Steklov eigenfunctions of $\Omega_\varepsilon$ such that $u^j_{\varepsilon}$ is an eigenfunction with eigenvalue $\sigma_{j}(\Omega_\varepsilon)$. By an argument similar to the proof of Claim $\ref{uniformbound}$, for each fixed $j\in \mathbb{N}$, there exists $\varepsilon_0>0$ such that for all $\varepsilon<\varepsilon_0$,  $\sigma_j(\Omega_\varepsilon)$, and hence the energy, is uniformly bounded independent of $\varepsilon$. As in the proof of Theorem \ref{higherorder}, elliptic boundary estimates give uniform bounds on $u^j_\varepsilon$ and its derivatives up to $\partial \Omega_\varepsilon$, and there exists a sequence $\varepsilon_i \rightarrow 0$ such that for each $j$, $u^j_{\varepsilon_i}$ converges in $C^2(K)$ as $i\rightarrow \infty$ on any compact subset $K$ of $\sqcup_l \overline{\mathbb{B}^n}\setminus \{p_1, \ldots, p_{l-1}\}$ to a harmonic function $u^j$ on $ \sqcup_l \mathbb{B}^n$ satisfying
\[
    \frac{\partial u^j}{\partial \eta}=\sigma^j u^j \ \text{on}\ \sqcup_l \partial \mathbb{B}^n\setminus \{p_1, \ldots, p_{l-1}\}
\]
with $\sigma^j=\lim_{i\rightarrow \infty}\sigma_j(\Omega_{\varepsilon_i})$. 

\textbf{Step 2: Nonconcentration of $L^2$ norms of the eigenfunctions around $p_1,\cdots,p_{l-1}$.} 
For a point $p$ on the boundary of the unit ball $\mathbb{B}^n$, we consider the geodesic ball $\mathcal{B}_{s}(p)$ in $\mathbb{B}^n$ centred at $p$ with radius $s$. Notice that if $s\geq\varepsilon$ and $p$ is chosen along the axis of the domain $\Omega_\varepsilon$ then $\mathcal{B}_{s}(p)$ will contain the intersection of the adjacent overlapping balls. The boundary of $\mathcal{B}_{s}(p)$ consists of two parts: $\Gamma^1_{s}(p)=\partial\mathcal{B}_{s}(p)\cap \mathbb{B} $ and $\Gamma^2_{s}(p)=\partial\mathcal{B}_{s}(p)\setminus \Gamma^2_{s}(p)$. In the following we choose $r_0$ small such that $\mathcal{B}_{s}(p)$ is uniformly equivalent to a Euclidean half ball of radius $s$ for $s<r_0$.

We claim that for any smooth function $u$ on $\mathcal{B}_{s}(p)$ with $u=0$ on $\Gamma^1_{s}(p)$,
\begin{equation}\label{Dirichlet-Neumann}
    \int_{\mathcal{B}_{s}(p)}u^2\leq C_1(n) \, s^2 \int_{\mathcal{B}_{s}(p)}|\nabla u|^2
\end{equation}
and
\begin{equation}\label{Neumann-Steklov}
\int_{\Gamma^2_{s}(p)}u^2\leq C_2(n) \, s\int_{\mathcal{B}_{s}(p)}|\nabla u|^2.
\end{equation}
Here $C_1(n),C_2(n)$ are constants depending only on the dimension $n$. The above two Poincare inequalities follow from estimates of the first Dirichlet-Neumann eigenvalue and the first Dirichlet-Steklov eigenvalue of a Euclidean half ball (or see \cite[Lemma 4.5 \& Lemma 4.6]{FS2019P}).

According to \cite[Lemma 4.3]{FS2019P} there exists $s_1$ and a smooth cut off function $\zeta$, which is $0$ on $\mathcal{B}_{r_0}\setminus \mathcal{B}_{s_1}$ and $1$ on $\mathcal{B}_{s}$, such that for any smooth function $u$ defined on $\mathcal{B}_{r_0}$,
\begin{equation}\label{cut-off function from FS paper}
    \int_{\mathcal{B}_{r_1}\setminus \mathcal{B}_{s}}|\nabla \zeta|^2u^2\leq C(s) \left(\int_{\mathcal{B}_{r_0}\setminus \mathcal{B}_{s}}u^2+\int_{\mathcal{B}_{r_0}}|\nabla u|^2\right).
\end{equation}
Here $s<s_1=s_1(s)<r_0$ and $s_1(s)=o(1), C(s)=o(1)$. Now if $u$ is any smooth function defined on the unit ball $\mathbb{B}^n$, applying $\zeta u$ to $(\ref{Dirichlet-Neumann})$ we have
\begin{align}\label{interior small half ball estimate}
    \int_{\mathcal{B}_{s}(p)}u^2&
    \leq \int_{\mathcal{B}_{s_1}(p)}(\zeta u)^2\nonumber\\
    &\leq C_1(n)s_1^2 \int_{\mathcal{B}_{s_1}(p)}|\nabla \zeta u|^2\nonumber\\
    &\leq C_1(n)s_1^2\left(\int_{\mathcal{B}_{s_1}(p)}|\nabla u|^2
     +\int_{\mathcal{B}_{s_1}(p)\setminus \mathcal{B}_{s}(p)}|\nabla \zeta|^2u^2\right)\nonumber\\
    & \leq C_1(s) \left(\int_{\mathcal{B}_{r_0}(p)}|\nabla u|^2+\int_{\mathcal{B}_{r_0}(p)\setminus \mathcal{B}_{s}(p)}u^2\right)
\end{align}
where $C_1(s)=o(1)$, and we used $(\ref{cut-off function from FS paper})$ in the last inequality. Similarly, applying $\zeta u$ to $(\ref{Neumann-Steklov})$ we obtain
\begin{equation}\label{boundary small portion estimate}
    \int_{\Gamma^2_{s}(p)}\leq C_2(s) \left(\int_{\mathcal{B}_{r_0}(p)}|\nabla u|^2+\int_{\mathcal{B}_{r_0}(p)\setminus \mathcal{B}_{s}(p)}u^2\right)
\end{equation}
where $C_2(s)=o(1).$

Now we consider the eigenfunction $u_\varepsilon^j$ on domain $\Omega_\varepsilon.$ In what follows we shall show that interior and boundary $L^2$ norms of $u_\varepsilon^j$ don't concentrate near $p_1, \ldots, p_{l-1}$ as $\varepsilon \rightarrow 0$. The common axis of the domain $\Omega_\varepsilon$ intersects the boundaries of the overlapping balls at $2l-2$ points, $p^{(1)}_{1}(\varepsilon),p^{(2)}_1(\varepsilon),\cdots,p^{(1)}_{l-1}(\varepsilon),p^{(2)}_{l-1}(\varepsilon)$, such that as $\varepsilon\rightarrow 0$, $p_k^{(1)}(\varepsilon)$ and $p_{k}^{(2)}(\varepsilon)$ converge to $p_k$ for $k=1,\cdots,l-1$. We apply the inequality  $(\ref{interior small half ball estimate})$ at these points 
to the restriction of the function $u_\varepsilon^j$ to each ball.
Summing the inequalities yields that for any $\varepsilon<s<s_1(s)<r_0,$
\begin{align*}
\int_{\cup_{i=1}^{2}\cup_{k=1}^{l-1} \mathcal{B}_{s}(p_k^{(i)}(\varepsilon))}(u_\varepsilon^j)^2
&\leq 2C_1(s) \left(\int_{\Omega_{\varepsilon}}|\nabla u_\varepsilon^j|^2+\int_{\cup_{i=1}^2\cup_{k=1}^{l-1}\mathcal{B}_{r_0}(p_k^{(i)}(\varepsilon))\setminus \mathcal{B}_{s}(p_k^{(i)}(\varepsilon))}(u_\varepsilon^j)^2\right)\\
&\leq C \cdot \, C_1(s)
\end{align*}
where the last inequality follows from the trace inequality and the uniform energy bound. Similarly,
applying $(\ref{boundary small portion estimate})$ we obtain that for any $\varepsilon<s<s_1(s)<r_0,$
\begin{align*}
    \int_{\cup_{i=1}^2\cup_{k=1}^{l-1}\Gamma^2_{s}(p_k^{(i)}(\varepsilon))\setminus\Gamma^2_{\varepsilon}(p_k^{(i)}(\varepsilon))}(u_{\varepsilon}^j)^2&\leq
    \int_{\cup_{i=1}^2\cup_{k=1}^{l-1}\Gamma^2_{s}(p_k^{(i)}(\varepsilon))}(u_{\varepsilon}^j)^2\\
    &\leq 2C_2(s)\left(\int_{\Omega_{\varepsilon}}|\nabla u_\varepsilon^j|^2+\int_{\cup_{i=1}^2\cup_{k=1}^{l-1}\mathcal{B}_{r_0}(p_k^{(i)}(\varepsilon))\setminus \mathcal{B}_{s}(p_k^{(i)}(\varepsilon))}(u_\varepsilon^j)^2\right)\\
    &\leq C\cdot C_2(s).
\end{align*}

Hence following the ideas of the proofs of Claims \ref{extension} and $\ref{meanvaluezero}$, $\|u_{\varepsilon}^j\|_{H^1(\Omega_\varepsilon)}$ is uniformly bounded and therefore $u^j \in H^1(\sqcup_l B^n)$, $u^j$ extends to a Steklov eigenfunction of $\sqcup_l \mathbb{B}^n$, and $\{u^j\}$ are orthonormal on the boundary of $\sqcup_l \mathbb{B}^n$.

 \textbf{Step 3: Construction of cut off function.} Define the following cutoff function on $\sqcup_l \mathbb{B}^n$,
\[
     \varphi_\varepsilon=\begin{cases}
           0,  & r<\varepsilon^2 \\
           \frac{\ln r-\ln \varepsilon^2}{\ln \varepsilon-\ln \varepsilon^2} & \varepsilon^2 \leq r\leq \varepsilon  \\
           1 & r> \varepsilon
          \end{cases}
\]
where $r$ is the distance function to the nearest of the points $p_1,\ldots,p_{l-1}$.  Let $T_t=\{x\in \sqcup_l \mathbb{B}^n: r(x)\leq t\}$. We have
\begin{equation} \label{energyvanish}
   \int_{\sqcup_l \mathbb{B}^n}|\nabla  \varphi_\varepsilon|^2
   =\frac{1}{(\ln\varepsilon)^2}\int_{T_\varepsilon\setminus T_{\varepsilon^2}}\frac{1}{r^2} 
   \leq \frac{(2l-2)C(n)}{(\ln\varepsilon)^2}\int_{\varepsilon^2}^\varepsilon r^{n-3}\ dr=C(n,l)\epsilon_n(\varepsilon)
\end{equation}
where $\epsilon_{2}(\varepsilon)=-1/\ln\varepsilon$ and $\epsilon_n(\varepsilon)=\varepsilon^{n-2}(1-\varepsilon^{n-2})/(\ln\varepsilon)^2$ for $n\geq 3$.

\textbf{Step 4: Proof of the convergence of eigenvalues.} We now use induction to prove that (\ref{eigenvalue-convergence}) holds: $\lim_{\varepsilon \rightarrow 0} \sigma_j(\Omega_\varepsilon)=\sigma_j(\sqcup_l \mathbb{B}^n)$. It is clear that $\sigma_0(\Omega_\varepsilon)=0$ for any $\varepsilon$ and $\sigma_0(\sqcup_l \mathbb{B}^n )=0$, and so (\ref{eigenvalue-convergence}) holds for $j=0$. For fixed $j\in \mathbb{N}$, suppose that (\ref{eigenvalue-convergence}) holds for $i=0,1,\cdots,j-1$; that is, $u^i$ is a Steklov eigenfunction of $\sqcup_l \mathbb{B}^n$ with eigenvalue $\sigma_i(\sqcup_l \mathbb{B}^n)$ for $i=0,1,\cdots,j-1$. According to Step 1, we have
\[
      \int_{\partial (\sqcup_l \mathbb{B}^n) }u^j
      =\int_{\partial (\sqcup_l \mathbb{B}^n) }u^1u^j
      =\cdots=\int_{\partial (\sqcup_l \mathbb{B}^n) }u^{j-1}u^j=0.
\]
Then $u^j$ is an admissible test function for the $j$-th eigenvalue of $\sqcup_l \mathbb{B}^n$. Thus $\sigma_j(\sqcup_l \mathbb{B}^n)\leq \sigma^j$, and we have
\begin{equation} \label{oneinequality}
    \liminf_{\varepsilon\rightarrow 0^+}\sigma_j(\Omega_\varepsilon)
    \geq \sigma_j(\sqcup_l \mathbb{B}^n).
\end{equation} 

In order to prove the theorem, we need to prove the reverse inequality.  We will use a $j$-th eigenfunction of $\sqcup_l \mathbb{B}^n$, which we denote by $v$, to construct an admissible test function for the $j$-th eigenvalue of $\Omega_{\varepsilon^2}$, where $\varepsilon^2$ is the square of $\varepsilon$. This part of the proof is slightly different from the proof of Theorem $\ref{higherorder}$, since we need to cut off and glue functions to construct the admissible test function to approximate the $j$-th eigenvalue of $\Omega_{\varepsilon^2}$.  

Let $f=v\varphi_\varepsilon-\sum_{i=0}^{j-1}(\int_{\partial \Omega_{\varepsilon^2}} v \varphi_\varepsilon u_{\varepsilon^2}^i)u_{\varepsilon^2}^i$, where we recall that $\{u^i_{\varepsilon^2}\}_{i=0}^\infty$ are orthonormal eigenfunctions of $\Omega_{\varepsilon^2}$. By construction, $f$ is orthogonal to $u^0_{\varepsilon^2},\cdots,u^{j-1}_{\varepsilon^2}$ on $\partial \Omega_{\varepsilon^2}$, and so 
\[
      \sigma_j(\Omega_{\varepsilon^2})
      \leq \frac{\int_{\Omega_{\varepsilon^2}}|\nabla f|^2}{\int_{\partial \Omega_{\varepsilon^2}}f^2}.
\]
A simple calculation turns the numerator to
\begin{eqnarray} \label{22222222}
    \int_{\Omega_{\varepsilon^2}}|\nabla f|^2
    &=& \int_{\Omega_{\varepsilon^2}}|\nabla (v\varphi_\varepsilon)|^2
    +\sum_{i=1}^{j-1}\sigma_i (\Omega_{\varepsilon^2})
    \left( \int_{\partial \Omega_{\varepsilon^2}}v\varphi_\varepsilon u^i_{\varepsilon^2} \right)^2  \nonumber\\
    &&-2\sum_{i=1}^{j-1} \int_{\partial \Omega_{\varepsilon^2}}v\varphi_\varepsilon u^i_{\varepsilon^2}
    \int_{\Omega_{\varepsilon^2}} \langle \nabla u^i_{\varepsilon^2}, \nabla (v\varphi_\varepsilon) \rangle.
\end{eqnarray}
The first term can be estimated as 
\begin{eqnarray} \label{numeratorestimate1}
    \int_{\Omega_{\varepsilon^2}}|\nabla (v\varphi_\varepsilon)|^2
    &\leq& \int_{\Omega_\varepsilon}|\nabla v|^2
       + 2\int_{T_\varepsilon\setminus T_{\varepsilon^2}}\varphi_\varepsilon^2|\nabla v|^2
       + v^2|\nabla \varphi_\varepsilon|^2\nonumber\\
     &\leq& \int_{\sqcup_l \mathbb{B}^n}|\nabla v|^2+C_1|T_\varepsilon\setminus T_{\varepsilon^2}|
        +C_2 \int_{T_\varepsilon\setminus T_{\varepsilon^2}}|\nabla \varphi_\varepsilon|^2\nonumber\\
     &=& \int_{\sqcup_l \mathbb{B}^n}|\nabla v|^2+C_3(\varepsilon)
\end{eqnarray}
with $C_3(\varepsilon)\rightarrow 0$ as $\varepsilon\rightarrow 0^+$ by $(\ref{energyvanish})$. It's not hard to check that for $0\leq i\leq j$,
\[
     \lim_{\varepsilon\rightarrow 0^+}\int_{\partial \Omega_{\varepsilon^2}}v\varphi_\varepsilon u^i_{\varepsilon^2} 
     =\int_{\sqcup_l \mathbb{B}^n}vu^i=0 
\]
and 
\[
    \left |\int_{\Omega_{\varepsilon^2}}\langle \nabla u^i_{\varepsilon^2}, \nabla (v\varphi_\varepsilon)\rangle \right|
    \leq ( \sigma_i(\Omega_{\varepsilon^2}))^{\frac{1}{2}}
    \left( \int_{\Omega_{\varepsilon^2}}|\nabla (v\varphi_\varepsilon)|^2 \right)^{\frac{1}{2}}.
\]
Since $\sigma_i(\Omega_{\varepsilon^2})$ are uniformly bounded in $\varepsilon$, and by $(\ref{numeratorestimate1})$, the last two terms of the right hand side of $(\ref{22222222})$ both tend to zero as $\varepsilon \rightarrow 0^+$. 
In all, we have an inequality for the numerator,
\[
    \int_{\Omega_{\varepsilon^2}}|\nabla f|^2\leq \int_{\sqcup_l \mathbb{B}^n}|\nabla v|^2+C_4(\varepsilon)
\]
with $C_4(\varepsilon)\rightarrow 0$ as $\varepsilon \rightarrow 0^+$. 

On the other hand, we expand the denominator as follows
\[
     \int_{\partial \Omega_{\varepsilon^2}}f^2
     =\int_{\partial \Omega_{\varepsilon^2}} (v\varphi_\varepsilon)^2
       -\sum_{i=0}^{j-1} \left(\int_{\partial \Omega_{\varepsilon^2}}v\varphi_\varepsilon u^i_{\varepsilon^2}\right)^2
     \rightarrow \int_{\partial (\sqcup_l \mathbb{B}^n)} v^2
\]
as $\varepsilon \rightarrow 0$, since the second term tends to zero as $\varepsilon \rightarrow 0$.

Now we combine all the estimates to get
\[
    \limsup_{\varepsilon\rightarrow 0}\sigma_j(\Omega_{\varepsilon^2})
    \leq \lim_{\varepsilon\rightarrow 0} 
           \frac{\int_{\Omega_{\varepsilon^2}}|\nabla f|^2}{\int_{\partial \Omega_{\varepsilon^2}}f^2}
    \leq  \frac{\int_{\sqcup_l \mathbb{B}^n}|\nabla v|^2}{\int_{\partial (\sqcup_l \mathbb{B}^n)}v^2}
    =\sigma_j(\sqcup_l \mathbb{B}^n).
\]
Therefore together with $(\ref{oneinequality})$ we have
$\lim_{\varepsilon\rightarrow 0}\sigma_j(\Omega_\varepsilon)=\sigma_j(\sqcup_l \mathbb{B}^n)$.
\end{proof}

As a special case of Theorem \ref{overlappingconvergence}, we obtain the following generalization of \cite[Theorem 1.3.1]{GP1} to higher dimensions.

\begin{theorem} \label{theorem:j-balls}
For the domains $\Omega_{\varepsilon,j}^{n}\subset \mathbb{R}^n$ (see Figure $\ref{figoverlapping}$), $n \geq 2$, we have
\[
       \lim_{\varepsilon\rightarrow 0^+}\bar{\sigma}_j(\Omega^n_{\varepsilon,j})
       = (j\cdot |\mathbb{S}^{n-1}|)^{\frac{1}{n-1}}.
\]
\end{theorem}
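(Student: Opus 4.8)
\textbf{Proof proposal for Theorem \ref{theorem:j-balls}.}

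The plan is to derive this directly from Theorem \ref{overlappingconvergence} by specializing $l=j$ and then computing the Steklov spectrum of the disjoint union $\sqcup_j \mathbb{B}^n$ explicitly. First I would recall that the Steklov eigenvalues of the unit ball $\mathbb{B}^n$ are well known: they are the nonnegative integers $0,1,2,\ldots$, where the eigenvalue $k$ occurs with multiplicity equal to the dimension of the space of degree-$k$ spherical harmonics on $\mathbb{S}^{n-1}$ (the Steklov eigenfunctions being the homogeneous harmonic polynomials restricted to $\mathbb{B}^n$, whose normal derivative on $\mathbb{S}^{n-1}$ equals $k$ times the function). In particular $\sigma_0(\mathbb{B}^n)=0$ and $\sigma_1(\mathbb{B}^n)=\cdots=\sigma_n(\mathbb{B}^n)=1$, since the coordinate functions $x_1,\ldots,x_n$ span the degree-one harmonics.

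Next I would pass to the disjoint union. The spectrum of $\sqcup_j \mathbb{B}^n$ is obtained by taking the union (with multiplicity) of $j$ copies of the spectrum of $\mathbb{B}^n$ and re-ordering: eigenfunctions supported on a single ball and equal to a Steklov eigenfunction there, zero elsewhere. The value $0$ now occurs with multiplicity $j$ (one locally constant function per component), so $\sigma_0(\sqcup_j\mathbb{B}^n)=\cdots=\sigma_{j-1}(\sqcup_j\mathbb{B}^n)=0$; the next eigenvalue is the smallest positive Steklov eigenvalue of a single ball, namely $1$, so $\sigma_j(\sqcup_j \mathbb{B}^n)=1$. This is precisely the point of item (2) of Remark \ref{remark5.2}: the index $j$ is chosen to match the number of balls so that $\sigma_j$ lands on the first positive eigenvalue. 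Then applying Theorem \ref{overlappingconvergence} with $l=j$ gives
\[
    \lim_{\varepsilon\rightarrow 0^+}\bar{\sigma}_j(\Omega^n_{\varepsilon,j})
    = \sigma_j(\sqcup_j \mathbb{B}^n)\cdot (j\cdot|\mathbb{S}^{n-1}|)^{\frac{1}{n-1}}
    = 1\cdot (j\cdot|\mathbb{S}^{n-1}|)^{\frac{1}{n-1}},
\]
which is the claimed identity.

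There is essentially no obstacle here: the only mild point requiring care is the bookkeeping that $\sigma_j(\sqcup_j\mathbb{B}^n)=1$ rather than $0$, i.e.\ verifying that the zero eigenvalue has multiplicity exactly $j$ (which uses connectedness of each ball) and that $1$ is the first positive Steklov eigenvalue of $\mathbb{B}^n$ (which is the standard computation via homogeneous harmonic polynomials, or alternatively Weinstock's theorem / the known spectrum). Everything else is an immediate substitution into Theorem \ref{overlappingconvergence}, so the proof is just a few lines.
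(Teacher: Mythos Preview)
Your proposal is correct and follows exactly the same approach as the paper: specialize Theorem \ref{overlappingconvergence} to $l=j$ and use $\sigma_j(\sqcup_j \mathbb{B}^n)=1$. The paper's own proof is a single sentence stating precisely this; your version simply adds the justification for $\sigma_j(\sqcup_j \mathbb{B}^n)=1$ that the paper leaves implicit.
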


\begin{proof}
When $l=j$, $\sigma_j(\sqcup_j \mathbb{B}^n)=1$, thus
$
\lim_{\varepsilon\rightarrow 0^+}\sigma_j(\Omega^n_{\varepsilon,j})
    = (j\cdot| \mathbb{S}^{n-1}|)^{\frac{1}{n-1}}. 
$
\end{proof}

Following Remark \ref{remark5.2}, if we take the domains that we overlap to be $\mathbb{B}^n_{\epsilon,\delta}$, we see that when $n \geq 3$ the domains $\Omega_{\varepsilon,j}^{n}$ do not attain the supremum of the $j$-th Steklov eigenvalue in the limit as $\varepsilon \rightarrow 0$ for any $j$, in contrast to the case in dimension two \cite{GP1}.

\begin{customthm}{\ref{answer}}
For $n \geq 3$ the supremum of the $j$-th normalized Steklov eigenvalue among contractible domains in $\mathbb{R}^n$ is not achieved in the limit by a sequence of contractible domains degenerating to the disjoint union of $j$ identical round balls.
\end{customthm}

\begin{proof}
Let $\tilde{\Omega}_{\varepsilon,j}^n$ be the domain obtained by overlapping $j$ copies of the domain $\mathbb{B}^n_{\epsilon,\delta}$. Then by the proof of Theorem $\ref{overlappingconvergence}$,
\[
      \lim_{\varepsilon\rightarrow 0^+}\bar{\sigma}_j(\tilde{\Omega}^n_{\varepsilon,j})
      =j^{\frac{1}{n-1}} \cdot \sigma_j(\mathbb{B}^n_{\epsilon,\delta})
          \cdot  |\partial \mathbb{B}^n_{\epsilon,\delta}|^{\frac{1}{n-1}}.
\]
By Theorem \ref{theorem:j-balls}, Corollary \ref{maintheorem1}, and Theorem $\ref{higherorder}$ it follows that
\[
     \lim_{\varepsilon\rightarrow 0^+}\bar{\sigma}_j(\tilde{\Omega}^n_{\varepsilon,j})
     > \lim_{\varepsilon\rightarrow 0^+}\bar{\sigma}_j(\Omega^n_{\varepsilon,j}).
\]
\end{proof}

We end this section with following corollary.
\begin{corollary}
For certain $j$ and $n$, if $\varepsilon >0$ is sufficiently small, then $\bar{\sigma}_j(\Omega_{\varepsilon, j}^n) > \bar{\sigma}_j(\mathbb{B}^n)$. 
\end{corollary}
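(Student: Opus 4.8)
The plan is to combine Theorem~\ref{theorem:j-balls} with the known value of $\bar{\sigma}_j(\mathbb{B}^n)$, so that the corollary reduces to a purely numerical comparison in those dimensions $n$ and indices $j$ where the inequality happens to hold. First I would recall from Theorem~\ref{theorem:j-balls} that
\[
      \lim_{\varepsilon\rightarrow 0^+}\bar{\sigma}_j(\Omega^n_{\varepsilon,j})
       = \bigl(j\cdot |\mathbb{S}^{n-1}|\bigr)^{\frac{1}{n-1}},
\]
so that for $\varepsilon$ sufficiently small $\bar{\sigma}_j(\Omega^n_{\varepsilon,j})$ is as close as we like to this limiting value. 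On the other hand, for the round ball the Steklov eigenvalues are explicitly known: $\sigma_j(\mathbb{B}^n)$ takes the values $0,1,1,\ldots$ with the eigenvalue $k$ appearing with the multiplicity of degree-$k$ spherical harmonics, and $|\partial \mathbb{B}^n| = |\mathbb{S}^{n-1}|$, so $\bar{\sigma}_j(\mathbb{B}^n) = \sigma_j(\mathbb{B}^n)\,|\mathbb{S}^{n-1}|^{\frac{1}{n-1}}$ is an explicit finite number. Thus the asserted inequality becomes
\[
      \bigl(j\cdot |\mathbb{S}^{n-1}|\bigr)^{\frac{1}{n-1}} > \sigma_j(\mathbb{B}^n)\,|\mathbb{S}^{n-1}|^{\frac{1}{n-1}},
      \quad\text{i.e.}\quad j^{\frac{1}{n-1}} > \sigma_j(\mathbb{B}^n),
\]
and I would simply exhibit pairs $(j,n)$ for which this holds.

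The key step is therefore to find such pairs. Since $\sigma_j(\mathbb{B}^n)$ grows only like $j^{2/n}$ (indeed it equals the spherical-harmonic degree $k\sim (\text{const})\,j^{1/n}$ corresponding to the $j$-th eigenvalue counted with multiplicity), while $j^{1/(n-1)}$ grows like a genuine power of $j$ with exponent $1/(n-1) > 1/n$, the left side eventually dominates for $n$ fixed and $j$ large; alternatively, for small $j$ one can check directly. For instance, taking $j$ in the range where $\sigma_j(\mathbb{B}^n)=1$ — that is, $1 \le j \le n$ (the multiplicity of the first nonzero Steklov eigenvalue of $\mathbb{B}^n$ is $n$) — the inequality $j^{1/(n-1)} > 1$ holds for every $j \ge 2$. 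Hence for any $n \ge 3$ and any $j$ with $2 \le j \le n$ we get $\bar{\sigma}_j(\Omega^n_{\varepsilon,j}) > \bar{\sigma}_j(\mathbb{B}^n)$ once $\varepsilon$ is small enough, which already establishes the corollary for infinitely many pairs $(j,n)$; one can then note that for each fixed $n$ the comparison continues to hold for all sufficiently large $j$ by the growth-rate argument above.

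The main obstacle, such as it is, is bookkeeping about the Steklov spectrum of the ball: one must correctly identify $\sigma_j(\mathbb{B}^n)$ as a function of $j$ and $n$, i.e. know that the nonzero Steklov eigenvalues of $\mathbb{B}^n$ are exactly the nonnegative integers $k=1,2,3,\dots$, with $k$ occurring with multiplicity $\binom{n+k-1}{k} - \binom{n+k-3}{k-2}$ (the dimension of the space of degree-$k$ harmonic homogeneous polynomials), so that the running index $j$ attains the eigenvalue $1$ for $1 \le j \le n$, the eigenvalue $2$ for the next block, and so on. Given this, the remainder is the elementary inequality $j^{1/(n-1)} > \sigma_j(\mathbb{B}^n)$, which needs only the observation that $\sigma_j(\mathbb{B}^n) = O(j^{1/n})$ while $j^{1/(n-1)}$ has a strictly larger growth exponent, together with the explicit check in the first block $2 \le j \le n$. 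No new analysis is required beyond Theorem~\ref{theorem:j-balls}.
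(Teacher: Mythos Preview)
Your reduction is exactly the paper's: by Theorem~\ref{theorem:j-balls} the inequality $\bar{\sigma}_j(\Omega_{\varepsilon,j}^n) > \bar{\sigma}_j(\mathbb{B}^n)$ for small $\varepsilon$ is equivalent to $j^{1/(n-1)} > \sigma_j(\mathbb{B}^n)$, and your explicit verification in the block $2 \le j \le n$ (where $\sigma_j(\mathbb{B}^n)=1$) is correct and already proves the corollary as stated.

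However, the asymptotic add-on is wrong and in fact contradicts the paper. The Steklov eigenvalue $\sigma_j(\mathbb{B}^n)$ does \emph{not} grow like $j^{2/n}$ or $j^{1/n}$: the Dirichlet-to-Neumann operator is first order on the $(n-1)$-dimensional sphere, so Weyl's law gives $\sigma_j(\mathbb{B}^n) \sim c_n\, j^{1/(n-1)}$. Concretely, the multiplicity of the eigenvalue $k$ is the dimension of degree-$k$ spherical harmonics on $S^{n-1}$, which is $\sim \tfrac{2}{(n-2)!}k^{n-2}$; summing gives $N(k)\sim \tfrac{2}{(n-1)!}k^{n-1}$ and hence $\sigma_j(\mathbb{B}^n) \sim \bigl(\tfrac{(n-1)!}{2}\bigr)^{1/(n-1)} j^{1/(n-1)}$. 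The two sides of your comparison therefore have the \emph{same} exponent. For $n \ge 4$ the leading constant $\bigl(\tfrac{(n-1)!}{2}\bigr)^{1/(n-1)}$ exceeds $1$, so $j^{1/(n-1)} < \sigma_j(\mathbb{B}^n)$ for all large $j$ --- the opposite of what you claim, and precisely what the paper records (``for higher $n$, it holds for all $j$ less than a certain finite number''). For $n=3$ the constant is exactly $1$ and the strict inequality fails at the perfect squares $j=4,9,16,\ldots$, again matching the paper. So drop the growth-rate sentence; the explicit case $2\le j\le n$ already suffices.
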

\begin{proof}
By Theorem $\ref{theorem:j-balls}$, we have
$\liminf_{\varepsilon\rightarrow 0^+} \bar{\sigma}_j(\Omega_{\varepsilon,j}^n) = (j n\omega_n)^{\frac{1}{n-1}}>\bar{\sigma}_j(\mathbb{B}^n)$ if $j^{1/(n-1)}> \sigma_j(\mathbb{B}^n)$. By simple calculation, this only holds in certain cases. For example, when $n=3$, it holds for all $j$ except when $j$ is $4,9,16,\ldots$. For  higher $n$, it holds all $j$ less than a certain finite number depending on $n$. In these cases we can choose sufficiently small $\varepsilon$ such that $\bar{\sigma}_j(\Omega_{\varepsilon,j}^n)>\bar{\sigma}_j(\mathbb{B}^n)$. 
\end{proof}

\end{document}